\documentclass[11pt]{article}
\usepackage{fullpage}
\usepackage{amsmath,amsfonts,amsthm,mathrsfs,mathpazo,xspace,hyperref,graphicx}

\newtheorem{theorem}{Theorem}

\newtheorem{lemma}[theorem]{Lemma}

\newtheorem{claim}[theorem]{Claim}

\newtheorem{corollary}[theorem]{Corollary}

\newcommand{\beq}{\begin{equation}}
\newcommand{\eeq}{\end{equation}}

\newcommand{\C}{\ensuremath{\mathbb{C}}}

\newcommand{\R}{\ensuremath{\mathbb{R}}}

 % inproduct, < | >

\newcommand{\Tr}{\mbox{\rm Tr}}
\newcommand{\Id}{\ensuremath{\mathop{\rm Id\,}\nolimits}}
\newcommand{\exactness}{\mbox{\rm ex}}
\newcommand{\rc}{\eta}

\newcommand{\os}[1]{\|#1\|_{os}}
\newcommand{\nc}[1]{\|#1\|_{nc}}

\newcommand{\cA}{\mathcal{A}}
\newcommand{\cB}{\mathcal{B}}

\newcommand{\eps}{\varepsilon}

% MARGIN NOTES

%\newif\ifnotes\notestrue
\newif\ifnotes\notesfalse

\ifnotes
\usepackage{color}
\definecolor{mygrey}{gray}{0.50}
\newcommand{\notename}[2]{{\textcolor{mygrey}{\footnotesize{\bf (#1:} {#2}{\bf ) }}}}
\newcommand{\noteswarning}{{\begin{center} {\Large WARNING: NOTES ON}\end{center}}}

\else

\newcommand{\notename}[2]{{}}
\newcommand{\noteswarning}{{}}

\fi

\begin{document}

\title{\bf Elementary Proofs of Grothendieck Theorems for Completely Bounded Norms}
\author{Oded Regev \footnote{CNRS, D{\'e}partement d'Informatique, {\'E}cole normale sup{\'e}rieure, Paris and Blavatnik School of Computer Science, Tel Aviv University.
Supported by a European Research Council (ERC) Starting Grant.
   } \and Thomas Vidick \footnote{Computer Science and Artificial Intelligence Laboratory, Massachusetts Institute of Technology. Supported by the National Science Foundation under Grant No. 0844626.}}
\date{}
\maketitle
%%%% DON'T REMOVE %%%%%
\noteswarning
%%%% DON'T REMOVE %%%%%

\begin{abstract}
We provide alternative proofs of two recent Grothendieck theorems for jointly completely bounded bilinear forms, 
originally due to Pisier and Shlyakhtenko~\cite{PS02OSGT} and Haagerup and Musat~\cite{HM08}. Our proofs are elementary and are inspired by the so-called embezzlement states in quantum information theory. 
Moreover, our proofs lead to quantitative estimates.
\end{abstract}

\section{Introduction}

Published in 1953, Grothendieck's theorem~\cite{Gro53}, a non-trivial statement regarding bounded bilinear forms on $L_\infty\times L_\infty$, had a major impact on Banach space theory. 
A non-commutative extension of Grothendieck's theorem to the setting of bounded bilinear forms on $C^*$-algebras,
already conjectured in~\cite{Gro53}, was first proved by Pisier under some approximability assumption~\cite{Pisier78NCGT}, and then in full generality by Haagerup~\cite{Haagerup85NCGT}.
More recently, analogues of Grothendieck's theorem for jointly completely bounded bilinear forms were obtained by Pisier and Shlyakhtenko~\cite{PS02OSGT} and by Haagerup and Musat~\cite{HM08}.
The former holds for forms defined on exact operator spaces (see also~\cite[Section 18]{PisierGT} for an alternative proof by Pisier and de la Salle)
and the latter holds for forms defined on arbitrary $C^*$-algebras. Such statements were earlier conjectured by Effros and Ruan~\cite{ER91} and by Blecher~\cite{Blecher92}. We refer the reader to~\cite{PisierGT} for a comprehensive survey of Grothendieck's theorem and its extensions. 

The purpose of this note is to give new, simpler (in our opinion), and more quantitative proofs of these two recent results. 
The existing proofs crucially use a kind of non-commutative probability space defined on Type III von Neumann algebras and are somewhat elaborate. In contrast, our proof technique, based on ideas originating in quantum information theory, is much more elementary and explicit. Our proof also leads to more quantitative versions of these Grothendieck theorems, which may be useful in some applications. (See~\cite{RegevV12a} for an application to quantum multiplayer games.) 

Similarly to~\cite{HM08} and the proof by Pisier and de la Salle~\cite[Section 18]{PisierGT}, our proof 
is based on a transformation which reduces the question to one of the better-understood non-commutative versions of Grothendieck's 
theorem~\cite{Haagerup85NCGT,JungeP95}. Our transformation is much more concrete, and is described in detail
in our main theorem, stated next.

\begin{theorem}\label{thm:os-gt} Let $\mathcal{A},\mathcal{B}$ be $C^*$-algebras, $E\subseteq \cA$, $F\subseteq \cB$ operator spaces, and $u:E\times F\to \C$ a bilinear form. Let $d\geq 1$ be an integer and $M_d$ the space of $d\times d$ complex matrices. There exists a unit vector $\Phi\in \C^d\otimes \C^d$, with associated bilinear form $\phi$ defined on $M_d\times M_d$ by $\phi(a,b) = \langle \Phi,(a\otimes b) \Phi\rangle$, such that for any 
finite sequences $(x_i)_i$ in $E$, $(y_i)_i$ in $F$, and positive reals $(t_i)_i$ there exist finite sequences $(\tilde{x}_j)_j$ in $E\otimes M_d$ and $(\tilde{y}_j)_j$ in $F\otimes M_d$ satisfying
\begin{align}
\Big\|\sum_{j}\, \tilde{x}_{j} \tilde{x}_{j}^* \Big\| \,\leq\, \Big\|\sum_i \,x_ix_i^*\Big\|,& \qquad\Big\|\sum_{j}\, \tilde{x}_{j}^*\tilde{x}_{j}  \Big\| \,\leq\,\Big\|\sum_i t_i^2\, {x}_i^*{x}_i \Big\|, \notag\\
 \Big\|\sum_{j}\, \tilde{y}_{j} \tilde{y}_{j}^* \Big\| \,\leq\,\Big\|\sum_i t_i^{-2} \,{y}_i{y}_i^*\Big\|,&\qquad \Big\|\sum_{j}\, \tilde{y}_{j}^*\tilde{y}_{j}  \Big\| \,\leq\, \Big\|\sum_i \,{y}_i^*{y}_i\Big\|,\label{eq:os-gt-cons}
\end{align}
and such that
\begin{equation}\label{eq:os-gt-main}
\Big| \sum_{j} \,(u\otimes \phi)(\tilde{x}_{j},\tilde{y}_{j}) \Big| \,\geq\, \Big| \sum_i \,u({x}_i, {y}_i )\Big| \, - C\,\frac{\ln\big(1+\max_i\{t_i,t_i^{-1}\}\big)}{1+\ln d} \sum_i | u({x}_i, {y}_i ) |,
\end{equation}
where $C>0$ is a universal constant. 
\end{theorem}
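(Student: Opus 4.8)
The plan is to put all of the work into one self-contained \emph{embezzlement lemma} about the vector $\Phi$, and then obtain Theorem~\ref{thm:os-gt} from it by a purely formal tensoring argument. The lemma I aim for is: there is a unit vector $\Phi\in\C^d\otimes\C^d$ such that for every real $t>0$ there are finite sequences $(a_k)_k,(b_k)_k$ in $M_d$ with $\sum_k a_k a_k^*\le\Id$, $\sum_k a_k^*a_k\le t^2\Id$, $\sum_k b_k b_k^*\le t^{-2}\Id$, $\sum_k b_k^*b_k\le\Id$, and $\bigl|\sum_k\langle\Phi,(a_k\otimes b_k)\Phi\rangle-1\bigr|\le C\ln(1+\max\{t,t^{-1}\})/(1+\ln d)$, where $\Id$ is the identity of $M_d$.

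Granting this, given $(x_i)_i$, $(y_i)_i$, $(t_i)_i$ I would apply the lemma for each $i$ with $t=t_i$ to get families $(a^{(i)}_k)_k,(b^{(i)}_k)_k$, and set $\tilde x_{i,k}=x_i\otimes a^{(i)}_k$, $\tilde y_{i,k}=y_i\otimes b^{(i)}_k$. Since $(u\otimes\phi)(x\otimes a,y\otimes b)=u(x,y)\phi(a,b)$, writing $s_i=\sum_k\langle\Phi,(a^{(i)}_k\otimes b^{(i)}_k)\Phi\rangle$ gives $\sum_{i,k}(u\otimes\phi)(\tilde x_{i,k},\tilde y_{i,k})=\sum_i u(x_i,y_i)s_i$, whence $|\sum_{i,k}(u\otimes\phi)(\tilde x_{i,k},\tilde y_{i,k})|\ge|\sum_i u(x_i,y_i)|-(\max_i|1-s_i|)\sum_i|u(x_i,y_i)|$, which is \eqref{eq:os-gt-main} since $t\mapsto\ln(1+\max\{t,t^{-1}\})$ is monotone. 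For \eqref{eq:os-gt-cons}: $\sum_{i,k}\tilde x_{i,k}\tilde x_{i,k}^*=\sum_i x_i x_i^*\otimes\bigl(\sum_k a^{(i)}_k(a^{(i)}_k)^*\bigr)$, and since $x_i x_i^*\ge0$ and $\sum_k a^{(i)}_k(a^{(i)}_k)^*\le\Id$, the difference $\sum_i x_i x_i^*\otimes\bigl(\Id-\sum_k a^{(i)}_k(a^{(i)}_k)^*\bigr)$ is a sum of tensor products of positive operators, hence positive, so $\|\sum_{i,k}\tilde x_{i,k}\tilde x_{i,k}^*\|\le\|\sum_i x_i x_i^*\otimes\Id\|=\|\sum_i x_i x_i^*\|$; the other three inequalities in \eqref{eq:os-gt-cons} follow identically from the remaining three conditions of the lemma. (If $C\ln(1+\max_i\{t_i,t_i^{-1}\})\ge 1+\ln d$ one may instead take all $\tilde x_j=\tilde y_j=0$, so only the complementary regime needs the lemma.)

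For the lemma I would take $\Phi=H_d^{-1/2}\sum_{s=1}^d s^{-1/2}\ket{s}\ket{s}$ with $H_d=\sum_{s=1}^d s^{-1}$, the van Dam--Hayden embezzlement state, and use the identity $\langle\Phi,(a\otimes b)\Phi\rangle=\Tr(D\,a\,D\,b^{\top})$ with $D=\mathrm{diag}(\mu_1,\dots,\mu_d)$, $\mu_s=(H_d s)^{-1/2}$. The key point is that in the variable $y=\ln s$ the weights $\mu_s^2$ are essentially uniform on $[0,\ln d]$, so $\Phi$ is nearly invariant under any dilation of the index by a factor small compared with $d$. Since $\Phi$ is symmetric under exchanging the two tensor factors, the substitution $(a_k,b_k)\mapsto(b_k^*,a_k^*)$ turns a solution for $t$ into one for $t^{-1}$ (replacing the correlation by its conjugate), so it is enough to construct the families for $t\ge 1$. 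For $t\ge1$ I would build $(a_k)_k,(b_k)_k$ explicitly from index-shift and block operators along the harmonic ladder --- roughly, partition $\{1,\dots,d\}$ into $\Theta(t^2)$ interleaved sub-ladders and use the associated isometric re-indexings, scaled so that each $b_k$ has norm $\le t^{-1}$ (hence the $t^{-2}$ on the $b$-side) --- and then bound $\sum_k\langle\Phi,(a_k\otimes b_k)\Phi\rangle$. Because $\mu_s$ varies only logarithmically, dilating the ladder by a factor $\Theta(t^2)$ changes the value of the bilinear form $(a,b)\mapsto\Tr(DaDb^{\top})$ on the constructed family by only $O(\ln t/\ln d)$; quantitatively this reduces to a harmonic-sum estimate such as $H_d^{-1}\sum_s(\mu_s-\mu_{s+r})^2=O(\ln r/\ln d)$, i.e.\ to the fact that a flat unit vector on an interval of length $L$ is moved a distance $O(\sqrt{c/L})$ by a shift of size $c=\Theta(\ln t)$.

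The main obstacle is the tension among the four operator inequalities. The condition $\sum_k b_k b_k^*\le t^{-2}\Id$ forces every $b_k$ to have norm at most $t^{-1}$ and the family to be ``spread out'', while $\sum_k a_k a_k^*\le\Id$ similarly caps the $a_k$; hence each term $\langle\Phi,(a_k\otimes b_k)\Phi\rangle$ is small, and in particular no single pair can work (one has $|\langle\Phi,(a\otimes b)\Phi\rangle|\le(\max\{t,t^{-1}\})^{-1}$), while a naive ``partition of unity'' family fails the first $a$-inequality. What makes it possible is precisely the near shift-invariance of the harmonic (embezzlement) state, which lets $\Theta(t^2)$ individually-tiny, mutually-orthogonal pieces sum to a correlation within $O(\ln t/\ln d)$ of $1$. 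Thus the only genuine computation is to write down the explicit family, verify the four operator inequalities simultaneously, and carry out the quadratic-form estimate; the tensoring reduction, the passage from the per-$i$ bounds to the $\max_i$ in \eqref{eq:os-gt-main}, and the symmetry step are all routine.
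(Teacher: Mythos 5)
Your overall architecture is exactly the paper's: everything is reduced, by the same tensoring argument, to a single-parameter lemma about matrix families interacting with the embezzlement state $\Phi_d$. Your ``embezzlement lemma'' is equivalent to the paper's Lemma~\ref{lem:lines} (take $a_k=L^k(t)$ and $b_k=t^{-1}L^k(t)$ there), your derivation of \eqref{eq:os-gt-cons} from positivity of $\sum_i x_ix_i^*\otimes(\Id-\sum_k a^{(i)}_k a^{(i)*}_k)$ is the same computation, and the passage from the per-$i$ errors $|1-s_i|$ to the $\max_i$ in \eqref{eq:os-gt-main} is identical. Your single-pair obstruction $|\langle\Phi,(a\otimes b)\Phi\rangle|\le(\max\{t,t^{-1}\})^{-1}$ and the symmetry reduction to $t\ge 1$ are both correct (the latter is not needed in the paper, whose construction is uniform in $t$).

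The gap is that the lemma itself --- which is where all the content of the theorem lives --- is only sketched. ``Partition $\{1,\dots,d\}$ into $\Theta(t^2)$ interleaved sub-ladders and use the associated isometric re-indexings'' is not yet a construction: genuine partial isometries $v_k$ have $v_kv_k^*$ and $v_k^*v_k$ of the same size, whereas you need the strongly asymmetric bounds $\le\Id$ versus $\le t^2\Id$; so the $a_k$ must be individually subnormalized, with column supports overlapping with multiplicity about $t^2$ while the row supports stay essentially disjoint, and all four inequalities must hold \emph{simultaneously} with the correlation estimate. The paper achieves this with a single ``line matrix'' $L(t)$ defined by $L(t)_{i,j}=|[i-1,i)\cap[(j-1)t^2,jt^2)|$, split into the $d^2$ rank-one pieces $L^{i+(j-1)d}(t)$ carrying the single entry $L(t)_{i,j}^{1/2}$: then $\sum_r L^rL^{r*}$ and $\sum_r L^{r*}L^r$ are exactly the diagonal matrices of row and column sums of $L(t)$ (giving the operator inequalities for free), and $\sum_r\langle\Phi,(L^r\otimes L^r)\Phi\rangle=\langle z,L(t)z\rangle$ with $z=Z_d^{-1/2}(i^{-1/2})_{i}$, which is bounded below by $t\big(1-C\ln(1+\max(t,t^{-1}))/(1+\ln d)\big)$ through the explicit integral $Z_d^{-1}\int_0^{d\min(1,t^2)}(\lceil r/t^2\rceil\lceil r\rceil)^{-1/2}\,dr$. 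Your heuristics (log-uniformity of the weights, error governed by a shift/harmonic-sum estimate of order $\ln t/\ln d$) correctly identify why such a family exists and the right error scale, but until the family is written down and the four inequalities plus the quadratic-form estimate are verified together, the proof is incomplete precisely at its crux.
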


We stress that both the vector $\Phi$ and the mapping $(x_i,y_i,t_i)\mapsto(\tilde{x}_j,\tilde{y}_j)$ are explicit.
In particular, the vector $\Phi=\Phi_d$ whose existence is promised in the theorem is known as the ``embezzlement state''~\cite{vDH03} in quantum information theory, and is defined as 
\begin{equation}\label{eq:def-phi}
 \Phi_d \,:=\, Z_d^{-1/2} \,\sum_{i=1}^d \, \frac{1}{\sqrt{i}}\, e_i\otimes e_i\,\in\C^d\otimes\C^d,
\end{equation}
where $(e_i)$ is the canonical basis of $\C^d$ and $Z_d = \sum_{i=1}^d i^{-1}$ the proper normalization constant. 
As an aside, we note that the name ``embezzlement" comes from an intriguing property that such states possess: any entangled state can be ``distilled'' from $\Phi_d$ (assuming $d$ large enough) using local operations while keeping $\Phi_d$ essentially intact. This property implies, for instance, that in the definition of the jointly completely bounded norm (see~\eqref{eq:defjcb}) it suffices to consider only evaluations of the amplified bilinear form on the states $\Phi$.
The construction of $(\tilde{x}_j,\tilde{y}_j)$ is also explicit, and relies on the construction of a family of $d$-dimensional ``line'' matrices given in Claim~\ref{claim:line-matrices} (see also Figure~\ref{fig:matrices} for an illustration).  It is the specific interplay between these matrices and the state $\Phi$ that guarantees the validity of~\eqref{eq:os-gt-cons} and~\eqref{eq:os-gt-main}.

\paragraph{Organization of the paper.} We present the proof of Theorem~\ref{thm:os-gt} in Section~\ref{sec:main}. In Section~\ref{sec:grothendieck} we apply the theorem to derive short proofs of the main results of~\cite{HM08} (in Section~\ref{sec:hm}) and of~\cite{PS02OSGT} (in Section~\ref{sec:ps}). We also obtain new quantitative estimates for both results.
 
\paragraph{Acknowledgments.} We thank Gilles Pisier for allowing us to include Claim~\ref{clm:oh}. We also thank him and Carlos Palazuelos for useful comments.

\section{Proof of Theorem~\ref{thm:os-gt}}\label{sec:main}

The main tool in our proof of Theorem~\ref{thm:os-gt} is the construction of a special family of ``line'' matrices (see
Figure~\ref{fig:matrices} for an illustration). We note that the choice of these matrices can be shown to be optimal in a certain precise sense.

\begin{figure}
\begin{centering}
\ifpdf
\includegraphics[width=0.4\textwidth]{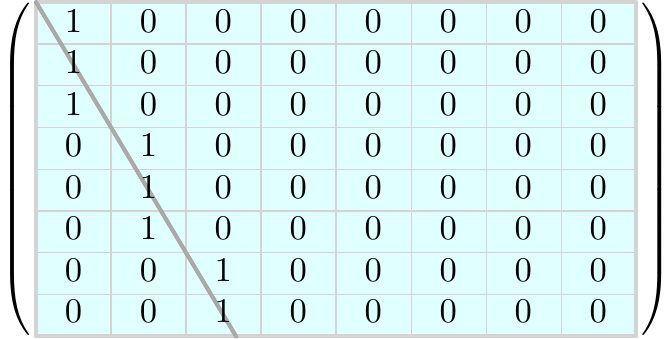} \qquad
\includegraphics[width=0.4\textwidth]{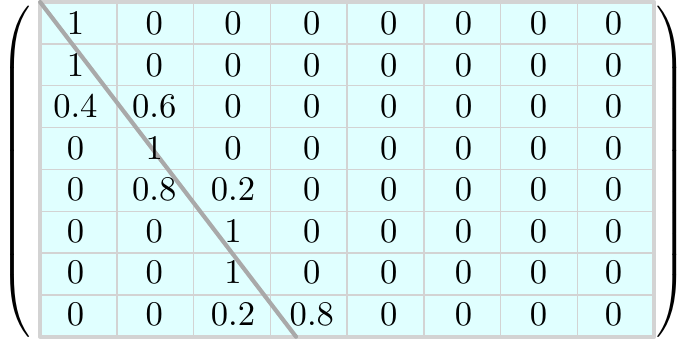}
\fi
\caption{$L(\sqrt{3})$ (left) and $L(\sqrt{2.4})$ (right) for $d=8$.}
\label{fig:matrices}
\end{centering}
\end{figure}

\begin{claim}\label{claim:line-matrices} 
For any integer $d\geq 1$ there exists a collection of $d\times d$ matrices $\big\{L(t)\big\}_{t\in\R_+}$, parametrized by the positive reals, satisfying the following conditions:
\begin{enumerate}
\item For all $t>0$, $L(t)$ has non-negative entries that sum to at most $1$ in every row, and to at most $t^2$ in every column;
\item There exists a unit vector $z \in \R^d$ with non-negative entries such that for all $t>0$,
\begin{align*}%\label{eq:lines-norm}
\Big(1-C\frac{\ln(1+\max(t,t^{-1}))}{1+\ln d}\Big)t \,\leq\,\langle z,L(t)z\rangle \,\leq\,t,
\end{align*}
where $C>0$ is a universal constant. In fact, one can take the unit vector $z = Z_d^{-1/2} (i^{-1/2})_{i=1}^{d}$, where 
$Z_d\,=\, \sum_{i=1}^d \,\frac{1}{i} \,\leq\, 1 + \int_1^d \,\frac{1}{r} \text{dr} \,=\, 1+\ln d$
is the proper normalization constant. 
\end{enumerate}
\end{claim}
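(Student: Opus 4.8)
The plan is to take $L(t)$ to be, concretely, the matrix of ``dilation by $t^2$'' on the half-line, expressed in the basis of indicator functions of the unit cells $[j-1,j)$, $1\le j\le d$ (these are the matrices depicted for $d=8$ in Figure~\ref{fig:matrices}). Writing $s=t^2$, I would define
\[
 L(t)_{ij}\;:=\;s\,\big|[\tfrac{i-1}{s},\tfrac{i}{s})\cap[j-1,j)\big|\;=\;\big|[i-1,i)\cap[(j-1)s,\,js)\big|,\qquad 1\le i,j\le d,
\]
where $|\cdot|$ denotes Lebesgue measure and the two expressions agree upon rescaling the line by $s$. The entries are manifestly nonnegative; the $i$-th row sums to $s\,\big|[\tfrac{i-1}{s},\tfrac{i}{s})\cap[0,d)\big|\le s\cdot\tfrac1s=1$ and the $j$-th column to $s\,\big|[0,\tfrac{d}{s})\cap[j-1,j)\big|\le s=t^2$, which is Property~1. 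The second expression makes the symmetry $L(t)=t^2\,L(1/t)^{T}$ apparent (and $L(1)=\Id$); I will use it to reduce the lower bound in Property~2 to the case $t\ge 1$.

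To compute the quadratic form, associate to a unit vector $z\in\R^d$ the step function $\psi\colon\R\to\R$ equal to $z_j$ on $[j-1,j)$ for $1\le j\le d$ and to $0$ elsewhere. Unwinding the first formula for $L(t)_{ij}$ (the $j$-summation builds $\psi(x)$, the $i$-summation builds $\psi(sx)$) yields the identity
\[
 \langle z,L(t)z\rangle\;=\;s\int_{\R}\psi(sx)\,\psi(x)\,dx .
\]
The upper bound is then immediate: $\int_\R\psi^2=\|z\|^2=1$, and substituting $u=sx$ gives $\int_\R\psi(sx)^2\,dx=s^{-1}$, so by Cauchy--Schwarz $\langle z,L(t)z\rangle\le s\cdot s^{-1/2}=t$ for \emph{every} unit vector $z$.

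For the lower bound I would take $z=Z_d^{-1/2}(i^{-1/2})_i$ and assume $t\ge 1$, hence $s\ge 1$; the case $t<1$ then follows from $\langle z,L(t)z\rangle=t^2\langle z,L(1/t)z\rangle$ since $\max(t,t^{-1})$ is invariant under $t\mapsto t^{-1}$. On the cell $[j-1,j)$ one has $j\le x+1$, so $\psi(x)\ge Z_d^{-1/2}(x+1)^{-1/2}$ on $[0,d)$; and since $s\ge 1$ the integrand above is supported on $[0,d/s)$. Therefore
\[
 \langle z,L(t)z\rangle\;\ge\;\frac{s}{Z_d}\int_0^{d/s}\frac{dx}{\sqrt{(sx+1)(x+1)}}\;\ge\;\frac{s}{Z_d}\int_0^{d/s}\frac{dx}{\sqrt{s}\,(x+1)}\;=\;\frac{\sqrt s}{Z_d}\,\ln\!\big(1+\tfrac{d}{s}\big),
\]
the middle inequality using $sx+1\le s(x+1)$. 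Since $\ln(1+d/s)\ge\ln d-\ln s$ and $Z_d\le 1+\ln d$ (and since the asserted bound is trivial whenever its right-hand side is negative, $L(t)$ having nonnegative entries), this gives $\langle z,L(t)z\rangle\ge t\big(1-\tfrac{1+\ln s}{1+\ln d}\big)$, and finally $\langle z,L(t)z\rangle\ge t\big(1-\tfrac{C\ln(1+t)}{1+\ln d}\big)$ because $1+\ln s=1+2\ln t\le C\ln(1+t)$ for $t\ge 1$ and a universal $C$.

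I expect the only genuinely delicate point to be the choice of construction. Property~1 and the upper bound are essentially forced, but making the Cauchy--Schwarz step nearly tight requires $\psi$ to be almost invariant under $x\mapsto sx$, and this is precisely why the harmonic weights $z_i\propto i^{-1/2}$ --- equivalently $\psi(x)\approx x^{-1/2}$, the scale-invariant profile on $\R_+$ --- are the right choice and incur only an $O\big(\ln(1+t)/\ln d\big)$ loss rather than a constant-factor one. Everything else, in particular the reduction to $t\ge 1$ and the inequality $1+\ln s\le C\ln(1+t)$, should be routine.
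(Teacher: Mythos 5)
Your proposal is correct and uses essentially the same construction and argument as the paper: the identical ``interval-overlap'' matrices $L(t)_{ij}=|[i-1,i)\cap[(j-1)t^2,jt^2)|$, the same vector $z\propto(i^{-1/2})_i$, the same Cauchy--Schwarz upper bound, and the same comparison of $\langle z,L(t)z\rangle$ with the integral $\int \frac{dr}{\sqrt{(1+r/t^2)(1+r)}}$. The only (harmless) differences are bookkeeping: you reduce to $t\ge 1$ via the symmetry $L(t)=t^2L(1/t)^{T}$ and lower-bound the integral by $s^{-1/2}\int_0^{d/s}\frac{dx}{x+1}$ instead of evaluating it in closed form as the paper does.
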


\begin{proof}
Let $t$ be a positive real, and define $L(t)$ by setting its $(i,j)$-th entry $L(t)_{i,j}$, for $i,j\in \{1,\ldots,d\}$, to the length of the interval $ [i-1,i) \cap [(j-1)t^2,jt^2)$. The first item in the claim clearly holds. 
For the second, we start with the upper bound, which actually holds for any unit vector $z$ and any $L(t)$ satisfying the constraints in the first item. Indeed, applying the Cauchy-Schwarz inequality,
$$\big|\langle z, L(t)z\rangle\big| = \Big|\sum_{i,j} L(t)_{i,j} \,z_{i}^* z_j \Big| \leq \Big( \sum_{i,j} L(t)_{i,j}|z_i|^2  \Big)^{1/2}\Big( \sum_{i,j} L(t)_{i,j} |z_j|^2 \Big)^{1/2} \leq t. $$

It remains to prove the lower bound. Using the vector $z$ appearing in the statement of the claim,
\begin{align*}
\langle z, L(t)z\rangle &= \frac{1}{Z_d}\int_{0}^{d\min(1,t^2)} \frac{1}{\sqrt{\lceil r/t^2\rceil \lceil r \rceil}}\, \text{dr}\\
&\geq \frac{1}{Z_d}\int_{0}^{d\min(1,t^2)} \frac{1}{\sqrt{( 1+r/t^2)( 1+r)}}\, \text{dr}\\
& = \frac{2t}{Z_d}\Big(\ln\Big(\sqrt{d\min(1,t^2)+1}+\sqrt{d\min(1,t^2)+t^2}\Big) - \ln\big(t+1\big)\Big)\\
&\geq t\, \frac{\ln\big( 2d\min(1,t^2)+(1+t)^2\big) - 2\ln\big(t+1\big)}{1+\ln d}\\
&\geq t\,\Big( 1- C\frac{\ln(1+\max(t,t^{-1}))}{1+\ln d} \Big),
\end{align*}
for some universal constant $C>0$. 
\end{proof}

The matrices constructed in the previous claim let us show the following lemma, which provides the key estimates required for the proof of Theorem~\ref{thm:os-gt}. 

\begin{lemma}\label{lem:lines} 
For any integer $d\geq 1$ and real $t>0$ there exists a sequence $(L^{r}(t))_r$ of $d^2$ matrices of dimensions $d\times d$, satisfying the following conditions for all $t>0$: 
\begin{align}
 \sum_r L^r(t) L^r(t)^*  \,\leq\, \Id\quad\text{and}\quad   \sum_r L^r(t)^* L^r(t)  \,\leq\, t^2\Id,\label{eq:line-bounds}\\
\Big|\sum_{r} \langle \Phi,(L^{r}(t)\otimes L^{r}(t)) \Phi\rangle -t\Big| \,\leq\, C\,t\,\frac{\ln(1+\max(t,t^{-1}))}{1+ \ln d},\label{eq:line-emb}
\end{align}
where $C>0$ is a universal constant and $\Phi \in \C^d \otimes \C^d$ is the unit vector defined in~\eqref{eq:def-phi}.
\end{lemma}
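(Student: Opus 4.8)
The plan is to build the $d^2$ matrices $L^r(t)$ by ``splitting'' the single matrix $L(t)$ furnished by Claim~\ref{claim:line-matrices} into its individual entries. I would index the family by pairs $r=(i,j)$ with $i,j\in\{1,\dots,d\}$ — exactly $d^2$ of them — and set $L^{(i,j)}(t):=\sqrt{L(t)_{i,j}}\,\ketbra{e_i}{e_j}$, the matrix whose only nonzero entry is $\sqrt{L(t)_{i,j}}$ in position $(i,j)$; here $(e_i)$ is the canonical basis and $L(t)_{i,j}\geq 0$ by item~1 of the claim, so the square root makes sense. The only fact about $\Phi$ that enters is the elementary identity $\langle\Phi,(a\otimes b)\Phi\rangle=Z_d^{-1}\sum_{i,j}(ij)^{-1/2}a_{i,j}b_{i,j}$, which is immediate from~\eqref{eq:def-phi}.

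With this choice the verifications are short. For~\eqref{eq:line-bounds}, one computes $L^{(i,j)}(t)L^{(i,j)}(t)^*=L(t)_{i,j}\,\ketbra{e_i}{e_i}$ and $L^{(i,j)}(t)^*L^{(i,j)}(t)=L(t)_{i,j}\,\ketbra{e_j}{e_j}$; both are already diagonal, so no cross terms arise when summing, and $\sum_r L^r(t)L^r(t)^*$, resp.\ $\sum_r L^r(t)^*L^r(t)$, is the diagonal matrix whose $i$-th, resp.\ $j$-th, entry is the $i$-th row sum, resp.\ $j$-th column sum, of $L(t)$ — at most $1$, resp.\ at most $t^2$, by item~1 of the claim. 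For~\eqref{eq:line-emb}, substituting $a=b=L^{(i,j)}(t)$ into the identity above collapses the double sum to its one surviving term, $\langle\Phi,(L^{(i,j)}(t)\otimes L^{(i,j)}(t))\Phi\rangle=Z_d^{-1}(ij)^{-1/2}L(t)_{i,j}$, so that $\sum_r\langle\Phi,(L^r(t)\otimes L^r(t))\Phi\rangle=Z_d^{-1}\sum_{i,j}(ij)^{-1/2}L(t)_{i,j}=\langle z,L(t)z\rangle$ with $z=Z_d^{-1/2}(i^{-1/2})_{i=1}^d$ the unit vector of item~2 of Claim~\ref{claim:line-matrices}. That item sandwiches $\langle z,L(t)z\rangle$ between $t\big(1-C\,\frac{\ln(1+\max(t,t^{-1}))}{1+\ln d}\big)$ and $t$, which is exactly~\eqref{eq:line-emb}.

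I do not expect a genuine obstacle: all of the analysis — the interaction between the staircase pattern of $L(t)$ and the harmonic weights $i^{-1/2}$ defining $\Phi$ — has already been carried out inside Claim~\ref{claim:line-matrices}, and Lemma~\ref{lem:lines} merely recasts that scalar estimate in matricial form. The one point that requires care is the choice of decomposition: it is essential that each entry of $L(t)$ be carried by a single $L^r(t)$ and with value equal to its square root, so that $\sum_r(L^r(t)_{i,j})^2=L(t)_{i,j}$ holds \emph{exactly} (a coarser splitting, e.g.\ by whole rows or columns of $L(t)$, would give $\sum_r(L^r(t)_{i,j})^2=L(t)_{i,j}^2$ and wreck the lower bound in~\eqref{eq:line-emb}), while simultaneously each $L^r(t)$ having a single nonzero entry is what makes both Gram sums diagonal and hence controlled by the row/column sums. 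Single-entry matrices indexed by $\{1,\dots,d\}^2$ achieve both at once and automatically provide the required count of $d^2$ matrices.
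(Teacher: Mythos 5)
Your proof is correct and is essentially identical to the paper's: the paper also defines $L^{i+(j-1)d}(t)$ as the single-entry matrix with $(L(t)_{i,j})^{1/2}$ in position $(i,j)$, observes that the two Gram sums are diagonal with entries the row and column sums of $L(t)$, and reduces~\eqref{eq:line-emb} to the identity $\sum_r\langle\Phi,(L^r(t)\otimes L^r(t))\Phi\rangle=\langle z,L(t)z\rangle$ combined with Item~2 of Claim~\ref{claim:line-matrices}. Your closing remark about why the entrywise square-root splitting is the right one is a useful observation that the paper does not spell out.
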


\begin{proof}
Let $(L(t))_{t\in \R_+}$ be the collection of matrices whose existence is promised by Claim~\ref{claim:line-matrices},  $z$ the corresponding vector, and note that $\Phi = \sum_i z_i\, e_i\otimes e_i$, where $(e_i)$ is the canonical basis of $\C^d$. For $1\leq i,j \leq d$ define $L^{i+(j-1)d}(t)$ by setting its $(i,j)$-th entry to $(L(t)_{i,j})^{1/2}$, and all other entries to $0$. Then $\sum_r L^r(t) L^r(t)^*$ is a diagonal matrix whose $(i,i)$-th entry is the sum of the entries in the $i$-th row of $L(t)$, while $\sum_r L^r(t)^* L^r(t) $ is diagonal with $(j,j)$-th entry the sum of the entries in the $j$-th column of $L(t)$. Hence the constraints~\eqref{eq:line-bounds} are satisfied as a consequence of Item~1 from Claim~\ref{claim:line-matrices}. The condition~\eqref{eq:line-emb} follows immediately from Item~2 of Claim~\ref{claim:line-matrices} by noting that
\[\sum_{r} \langle \Phi, ( L^{r}(t)\otimes L^{r}(t) )\Phi \rangle \,=\, \langle z, L(t) z \rangle.\qedhere\]
\end{proof}

Given Lemma~\ref{lem:lines}, the proof of Theorem~\ref{thm:os-gt} is relatively straightforward, and we give it below.

\begin{proof}[Proof of Theorem~\ref{thm:os-gt}]
Consider finite sequences $(x_i)_i$ in $E$, $(y_i)_i$ in $F$, positive reals $(t_i)_i$, and let $d$ be a positive integer. Let $(L^{r}(t_i))_{r\geq 1}$ be the matrices constructed in Lemma~\ref{lem:lines}. For each pair $(i,r)$ define 
\begin{align*}
\tilde{x}_{i,r} :=  {x}_i \otimes L^r(t_i) \,\in\, E\otimes M_d \quad\text{and}\quad
\tilde{y}_{i,r} := t_i^{-1} \,{y}_i \otimes L^r(t_i) \,\in\, F\otimes M_d.
\end{align*}
The bounds in~\eqref{eq:line-bounds} directly lead to the following upper bounds:
\begin{align*}
\Big\|\sum_{i,r}\, \tilde{x}_{i,r} \tilde{x}_{i,r}^* \Big\| &=\Big\|\sum_i\sum_{r} x_ix_i^* \otimes L^r(t_i)L^r(t_i)^*\Big\|\,\leq\, \Big\|\sum_i \,x_ix_i^*\Big\|,\\
\Big\|\sum_{i,r}\, \tilde{x}_{i,r}^*\tilde{x}_{i,r}  \Big\| &=\Big\|\sum_i\sum_{r} {x}_i^*{x}_i \otimes L^r(t_i)^* L^r(t_i)\Big\|\,\leq\,\Big\|\sum_i t_i^2\, {x}_i^*{x}_i \Big\|,
\end{align*}
and
\begin{align*}
 \Big\|\sum_{i,r}\, \tilde{y}_{i,r} \tilde{y}_{i,r}^* \Big\| &= \Big\|\sum_i\sum_{r} t_i^{-2}\,y_iy_i^* \otimes L^r(t_i)L^r(t_i)^*\Big\|\,\leq\,\Big\|\sum_i t_i^{-2} \,{y}_i{y}_i^*\Big\|,\\
\Big\|\sum_{i,r}\, \tilde{y}_{i,r}^*\tilde{y}_{i,r}  \Big\| &= \Big\|\sum_i\sum_{r} t_i^{-2}\,{y}_i^*{y}_i \otimes L^r(t_i)^* L^r(t_i)\Big\|\,\leq\, \Big\|\sum_i \,{y}_i^*{y}_i\Big\|,
\end{align*}
proving~\eqref{eq:os-gt-cons}. To conclude it remains to evaluate
\begin{align*}
\Big| \sum_{i,r} \,(u\otimes \phi)(\tilde{x}_{i,r},\tilde{y}_{i,r}) \Big| 
&= \Big| \sum_{i,r}\,t_i^{-1}\,u({x}_i, {y}_i )\,\langle \Phi,( L^r(t_i) \otimes L^r(t_i)) \Phi\rangle \Big| \notag\\
&\geq \Big| \sum_i \,u({x}_i, {y}_i )\Big| \, - C\,\frac{\ln\big(1+\max_i\{t_i,t_i^{-1}\}\big)}{1+\ln d} \sum_i  | u({x}_i, {y}_i ) |,
\end{align*}
where the inequality follows from~\eqref{eq:line-emb}. 
\end{proof}

\section{Two Grothendieck theorems}\label{sec:grothendieck}

In this section we show how the main results of~\cite{HM08} and~\cite{PS02OSGT}, as well as new quantitative estimates, can be derived from Theorem~\ref{thm:os-gt}. We first recall some useful definitions and notation, and refer the reader to~\cite{pisierbook} for additional background on operator spaces.

\paragraph{Norms on bilinear forms.}
Let $\cA$, $\cB$ be $C^*$-algebras, and $E\subseteq \cA$, $F\subseteq \cB$ operator spaces. A bilinear form $u:E\times F \to \C$ is called \emph{jointly completely bounded} if the naturally associated map $\tilde{u}:E\to F^*$ is completely bounded. In more detail, we define
\begin{align}\label{eq:defjcb}
 \|u\|_{jcb} := \sup_d \|u_d\|,
\end{align}
where for any integer $d\geq 1$, $u_d$ is the amplification 
\begin{align*}
u_d:\quad& E\otimes_{\min} M_d \times F\otimes_{\min} M_d \,\to\, M_d \otimes_{\min} M_d\\[2.5mm]
& \big(\sum a_i \otimes x_i,\sum b_i\otimes y_i\big)\,\mapsto\, \sum_{i,j} u(a_i,b_j) \, x_i\otimes y_j.
\end{align*}
For any unit vector $\Omega\in\C^d\otimes \C^d$ we also consider an associated map $u_d^\Omega$, defined as 
\begin{align}
u_d^\Omega:\quad& E\otimes_{\min} M_d \times F\otimes_{\min} M_d \,\to\, \C\notag\\[2.5mm]
& \hskip1.5cm\big(a,\,b\big)\hskip1.6cm\mapsto\, \langle \Omega,\,u_d(a,b) \Omega\rangle.\label{eq:def-vd}
\end{align}
Clearly for any integer $d$ and unit vector $\Omega$ it holds that 
\begin{equation}\label{eq:uphi-norm}
\|u_d^\Omega\|_{jcb} \,=\, \|u\|_{jcb},
\end{equation}
and in fact for any integer $n$ we have $\|u_n\|\leq \|(u_d^\Omega)_n\| \leq \|u_{dn}\|$. We will also make use of the notion of \emph{tracially bounded} bilinear forms, which first appears in~\cite{Blecher89}. It can be defined by specializing $\Omega$ in~\eqref{eq:def-vd} to the vectors $\Psi$ (known as the ``maximally entangled states" in quantum information theory),
\begin{equation}\label{eq:def-psi}
\Psi_d \,:=\, d^{-1/2}\, \sum_{i=1}^d \,e_i\otimes e_i\,\in\C^d\otimes\C^d.
\end{equation}
In detail, a bilinear map $u$ is said to be tracially bounded if the following supremum is finite,
$$ \|u\|_{tb}\,:=\, \sup_d \|u_d^{\Psi}\| \,=\, \sup \Big| \sum_{i,j}\,u(a_i,b_j)\,\langle \Psi,(x_i\otimes y_j)\,\Psi\rangle \big|\,=\,\sup \Big| \sum_{i,j} d^{-1}\Tr(x_iy_j^t)\,u(a_i,b_j) \big|,$$
where the supremum is taken over all integers $d\geq 1$ and $\sum a_i\otimes x_i \in E \otimes_{\min} M_d$, $\sum b_i\otimes y_i \in F \otimes_{\min} M_d$ of norm at most $1$. We clearly have $\|u\| \le \|u\|_{tb} \le \|u\|_{jcb}$.

\paragraph{Grothendieck values associated with bilinear forms.}
Grothendieck's theorem and its extensions can be stated in a number of essentially equivalent ways. The formulations we use here are in the form of an inequality that involves the following quantity:  
\begin{equation}\label{eq:sdp}
\os{u} \,:=\,\sup ~ \Big|\sum_i\, u(x_i,y_i)\Big|,
\end{equation}
where the supremum is taken over all finite sequences $(x_i)_{i}$ in $E$, $(y_i)_{i}$ in $F$, and positive reals $(t_i)_{i}$ satisfying the constraint\footnote{It is easy to see that we could equivalently use the constraint $\|\sum_i x_ix_i^*\|^{1/2} \|\sum_i y_i^*y_i\|^{1/2} + \|\sum_i t_i^2 x_i^*x_i\|^{1/2} \|\sum_i t_i^{-2}y_i y_i^*\|^{1/2} \leq 2$ instead of~\eqref{eq:sdpcons}. This is
the way it appears in, e.g.,~\cite[Theorem 0.4]{PS02OSGT}.}
\begin{align}
\max\Big\{\,\Big\| \sum_i  \,x_i x_i^* \Big\| + \Big\|\sum_i  t_i^2\, x_i^* x_i \Big\|, \, \Big\|\sum_i  t_i^{-2}\,y_i y_i^* \Big\| + \Big\|\sum_i  \,y_i^* y_i \Big\|\,\Big\}\,\leq\,2.\label{eq:sdpcons}
\end{align}
If we further restrict the coefficients $(t_i)$ to $t_i=1$ for all $i$, then we use $\nc{u}$ to denote the resulting supremum in~\eqref{eq:sdp}. Clearly $\|u\|\leq \nc{u}\leq\os{u}$. Our choice of normalization for the constraint~\eqref{eq:sdpcons} differs from the one adopted in~\cite{PS02OSGT,HM08}, where the constant $2$ on the right-hand side is replaced by a $1$. With our normalization, the following inequalities are easily seen to hold (see Appendix~\ref{app:ub-norms} for the proof): 
\begin{equation}\label{eq:relaxation}
  \|u\|_{tb} \,\leq\, \nc{u}\qquad\text{and}\qquad  \|u\|_{jcb} \,\leq\, \os{u}.
\end{equation}

\paragraph{Row and column norms.}
In order to state our quantitative estimates, for any operator space $E\subseteq\mathcal{A}$ we define a quantity $\rc(E)$ as
$$\rc(E)\,:=\, \max\Big\{\, \sup_{(x_i):\, \|\sum_i x_i^* x_i \|\leq 1}\, \big\|\,\sum_i\, x_i x_i^* \,\big\|^{1/2},\, \sup_{(x_i):\, \|\sum_i x_i x_i^* \|\leq 1}\, \big\|\,\sum_i\, x_i^* x_i \,\big\|^{1/2} \,\Big\}.\footnote{In other words, $\rc(E)$ is the maximum of the norms of the natural maps $C\otimes_{\min} E \to R\otimes_{\min} E$ and $R\otimes_{\min} E \to C\otimes_{\min} E$ (as maps
between Banach spaces).}$$
It is not hard to see that $\rc(M_n)\leq \sqrt{n}$; see Claims~\ref{clm:rcbound} and~\ref{clm:oh} in Appendix~\ref{sec:eta-bound} for a proof and for other upper bounds on $\rc$.

\subsection{Forms on \texorpdfstring{$C^*$}{C*}-algebras}\label{sec:hm}

In this section we prove the following corollary of Theorem~\ref{thm:os-gt}, reproving the main result of Haagerup and Musat~\cite{HM08} and obtaining new quantitative estimates.  

\begin{corollary}\label{cor:os-gt} Let $\mathcal{A},\mathcal{B}$ be $C^*$-algebras, and $u:\mathcal{A}\times \mathcal{B}\to \C$ a jointly completely bounded bilinear form. Then 
\begin{equation}\label{eq:cor-os-0}
 \|u\|_{jcb}\,\leq\,\os{ u } \,\leq\, 2\| u\|_{jcb}.
\end{equation}
%% Reason for 2 is to take care of case when both $\eta=1$ and $\eps=1$
Moreover, if $\rc(\mathcal{A}),\rc(\mathcal{B})$ are finite then for any $\eps>0$ and any $d \ge (2\rc(\mathcal{A})\rc(\mathcal{B})/\eps)^{C/\eps}$, where $C>0$ is a universal constant,
$$ (1-\eps) \os{ u } \,\leq\, 2\|u_d^\Phi\| \,\leq\, 2\| u_d\|.$$
\end{corollary}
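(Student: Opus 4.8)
\textbf{Proof plan for Corollary~\ref{cor:os-gt}.}

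The plan is to derive both parts from Theorem~\ref{thm:os-gt} together with the classical non-commutative Grothendieck inequality (in the form of~\cite{Haagerup85NCGT}, or its bilinear-form reformulation via~\cite{JungeP95}), which bounds $\|u\|_{jcb}$ in terms of the tracial quantity when the form is already jointly completely bounded on matrix amplifications. The first inequality in~\eqref{eq:cor-os-0}, namely $\|u\|_{jcb}\le \os{u}$, is exactly~\eqref{eq:relaxation}, so nothing is needed there. For the upper bound $\os{u}\le 2\|u\|_{jcb}$, I would take a near-optimal tuple $(x_i)_i,(y_i)_i,(t_i)_i$ witnessing $\os{u}$ (so it satisfies~\eqref{eq:sdpcons}), fix a large integer $d$, apply Theorem~\ref{thm:os-gt} to obtain $(\tilde x_j)_j$ in $\cA\otimes M_d$, $(\tilde y_j)_j$ in $\cB\otimes M_d$ and the embezzlement bilinear form $\phi$, and then recognize $\sum_j (u\otimes\phi)(\tilde x_j,\tilde y_j)$ as an evaluation of $u_d^\Phi$ on the elements $\sum_j \tilde x_j\otimes(\cdot)$, $\sum_j \tilde y_j\otimes(\cdot)$ — more precisely, this is a $\nc{\cdot}$-type expression for the form $u_d^\Phi$ on $\cA\otimes M_d$ and $\cB\otimes M_d$. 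The constraints~\eqref{eq:os-gt-cons}, after multiplying $\tilde y_j$ by the appropriate scalar to symmetrize, show that this tuple satisfies the $\nc{\cdot}$ constraint (the $t_i=1$ specialization of~\eqref{eq:sdpcons}) up to the normalization factor $2$, so $|\sum_j(u\otimes\phi)(\tilde x_j,\tilde y_j)|\le \nc{u_d^\Phi}$. Combining with~\eqref{eq:os-gt-main} and letting $d\to\infty$ kills the error term, giving $\os{u}\le \nc{u_d^\Phi}$ in the limit; then the classical non-commutative Grothendieck inequality gives $\nc{u_d^\Phi}\le 2\|u_d^\Phi\|_{tb}$ or directly $\le 2\|u_d^\Phi\|_{jcb}=2\|u\|_{jcb}$ by~\eqref{eq:uphi-norm}, yielding the factor $2$.

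For the quantitative ``moreover'' part, the point is to avoid taking $d\to\infty$: one must bound the error term in~\eqref{eq:os-gt-main} explicitly. Given $\eps>0$ and the near-optimal witnessing tuple for $\os{u}$, the key observation is that the $t_i$ can be assumed to lie in a bounded range controlled by $\rc(\cA)$ and $\rc(\cB)$. Indeed, from~\eqref{eq:sdpcons} we have $\|\sum_i x_ix_i^*\|\le 2$ and $\|\sum_i t_i^2 x_i^*x_i\|\le 2$, so $\|\sum_i x_i^*x_i\|\le 2\rc(\cA)^2/t_{\min}^2$ if all $t_i\ge t_{\min}$... more carefully, one shows that any $t_i$ that is too large or too small contributes negligibly and can be truncated to the range $[\,(2\rc(\cA)\rc(\cB))^{-1},\,2\rc(\cA)\rc(\cB)\,]$ (or a similar explicit interval), incurring only a small loss in the value. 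With $\max_i\{t_i,t_i^{-1}\}\le 2\rc(\cA)\rc(\cB)$, the error coefficient in~\eqref{eq:os-gt-main} becomes $C\ln(1+2\rc(\cA)\rc(\cB))/(1+\ln d)$, which is at most $\eps$ once $1+\ln d\ge (C/\eps)\ln(1+2\rc(\cA)\rc(\cB))$, i.e.\ $d\ge (2\rc(\cA)\rc(\cB)/\eps)^{C/\eps}$ after adjusting constants. Then $\sum_i|u(x_i,y_i)|\le$ (some polynomial in $\rc$) times $\os{u}$ by Cauchy–Schwarz against the constraint, which is absorbed into the $\eps$-loss, and we conclude $(1-\eps)\os{u}\le |\sum_j(u\otimes\phi)(\tilde x_j,\tilde y_j)|\le \nc{u_d^\Phi}\le 2\|u_d^\Phi\|$, using again the classical non-commutative Grothendieck theorem for the finite-dimensional amplified form $u_d^\Phi$ (where $\|\cdot\|_{tb}$ and $\|\cdot\|$ of the amplification coincide up to the Grothendieck constant $\le 2$). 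The final bound $2\|u_d^\Phi\|\le 2\|u_d\|$ is immediate from the observation following~\eqref{eq:uphi-norm}.

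The main obstacle I expect is the truncation argument for the $t_i$: one needs to argue that indices $i$ with $t_i$ outside the prescribed window can be removed (or rescaled to the window's boundary) while changing $\sum_i u(x_i,y_i)$ by at most $O(\eps)\os{u}$ and while keeping the constraint~\eqref{eq:sdpcons} satisfied (up to a harmless constant). This requires splitting the sum into dyadic blocks according to the size of $t_i$, bounding the contribution of each extreme block by Cauchy–Schwarz using both the row and column constraints and the definition of $\rc$, and checking that the geometric decay across blocks makes the tail negligible once the window has width $\sim \rc(\cA)\rc(\cB)/\eps$ on a logarithmic scale — which is precisely what drives the $(\cdots)^{C/\eps}$ dependence of $d$. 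A secondary, more routine point is the bookkeeping to pass from the asymmetric constraints~\eqref{eq:os-gt-cons} to the symmetric $\nc{\cdot}$ constraint for $u_d^\Phi$, which just amounts to rescaling the $\tilde y_j$ and invoking the equivalence of the two forms of the constraint noted in the footnote to~\eqref{eq:sdpcons}.
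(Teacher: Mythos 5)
Your route is essentially the paper's: the lower bound is~\eqref{eq:relaxation}, the upper bound combines Theorem~\ref{thm:os-gt} with the non-commutative Grothendieck theorem (Theorem~\ref{thm:ncgt}) applied to $u_d^\Phi$, and the quantitative part reduces to showing that the $t_i$ in a near-optimal witness can be confined to a window controlled by $\rc(\cA)\rc(\cB)$ (the paper's Claim~\ref{claim:maxt}). Two small quantitative slips are worth flagging. First, truncating the $t_i$ to $[\,(2\rc(\cA)\rc(\cB))^{-1},2\rc(\cA)\rc(\cB)\,]$ is not enough: discarding the indices outside a threshold $T$ costs a fraction of order $\rc(\cA)\rc(\cB)/T$ of $\os{u}$ (this is exactly the single-block rescaling in Claim~\ref{claim:maxt}; your dyadic decomposition is unnecessary), so one must take $T\approx 8\rc(\cA)\rc(\cB)/\eps$ — your later remark about a window of width $\sim\rc(\cA)\rc(\cB)/\eps$ is the correct one, and it is what produces the exponent base $2\rc(\cA)\rc(\cB)/\eps$ in the bound on $d$. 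Second, the factor $\sum_i|u(x_i,y_i)|$ in the error term of~\eqref{eq:os-gt-main} should be bounded by $\os{u}$ itself, with no $\rc$-dependent constant, by replacing each $x_i$ with $\alpha_i x_i$ for unimodular $\alpha_i$ (which preserves~\eqref{eq:sdpcons}); a bound that is ``polynomial in $\rc$'' would leak into the exponent of $d$ and weaken the stated form $d\ge(2\rc(\cA)\rc(\cB)/\eps)^{C/\eps}$. With these two fixes your argument is the paper's proof.
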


To prove the corollary we will use Theorem~\ref{thm:os-gt} to perform a reduction to the ``non-commutative Grothendieck theorem''~\cite{Haagerup85NCGT} which shows that an inequality similar to~\eqref{eq:cor-os-0} holds for the case of \emph{bounded} forms defined on $C^*$-algebras.

\begin{theorem}[Non-commutative GT,~\cite{Haagerup85NCGT}]\label{thm:ncgt}
Let $\cA,\cB$ be $C^*$-algebras and $u:\cA\times \cB \to \C$ a bounded bilinear form. Then
\begin{align*}%\label{eq:nc-gt}
\nc{u}\,\leq\, 2 \|u\|.
\end{align*}
\end{theorem}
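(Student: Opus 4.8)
This is the non-commutative Grothendieck theorem of Haagerup~\cite{Haagerup85NCGT}, which the present note invokes as a black box -- exactly as~\cite{HM08} and~\cite[Section~18]{PisierGT} do -- so strictly speaking one simply cites it. If a self-contained argument is wanted, here is the route I would take, following Pisier's account in~\cite{PisierGT}.

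The first step is to isolate the genuinely hard statement, the \emph{factorization form}: there exist states $f_1,f_2$ on $\cA$ and $g_1,g_2$ on $\cB$ such that
\[ |u(a,b)| \,\leq\, \|u\|\,\big(f_1(aa^*)+f_2(a^*a)\big)^{1/2}\big(g_1(bb^*)+g_2(b^*b)\big)^{1/2} \qquad(a\in\cA,\ b\in\cB). \]
Granting this, Theorem~\ref{thm:ncgt} follows in two lines. Let $(x_i)$ in $\cA$ and $(y_i)$ in $\cB$ be finite sequences admissible for the supremum defining $\nc{u}$, i.e.\ the $t_i=1$ case of~\eqref{eq:sdp}--\eqref{eq:sdpcons}, so that $\|\sum_i x_ix_i^*\|+\|\sum_i x_i^*x_i\|\leq 2$ and the analogous bound holds for the $y_i$. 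By the triangle inequality, the factorization bound applied termwise, and the Cauchy--Schwarz inequality,
\[ \Big|\sum_i u(x_i,y_i)\Big| \,\leq\, \|u\|\,\Big(\sum_i\big(f_1(x_ix_i^*)+f_2(x_i^*x_i)\big)\Big)^{1/2}\Big(\sum_i\big(g_1(y_iy_i^*)+g_2(y_i^*y_i)\big)\Big)^{1/2}; \]
since each $f_j,g_j$ is positive of norm one we have $\sum_i\big(f_1(x_ix_i^*)+f_2(x_i^*x_i)\big)=f_1(\sum_i x_ix_i^*)+f_2(\sum_i x_i^*x_i)\leq\|\sum_i x_ix_i^*\|+\|\sum_i x_i^*x_i\|\leq 2$, and likewise the second factor is at most $2$, whence $|\sum_i u(x_i,y_i)|\leq 2\|u\|$, as claimed.

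To prove the factorization form, normalize $\|u\|=1$. Since the state spaces of $\cA$ and $\cB$ are weak-$*$ compact, a routine limiting argument reduces matters to producing the four states on an arbitrary finitely generated (indeed finite-dimensional) subsystem and then passing to a cluster point. On such a finite-dimensional core one extracts the states by a Hahn--Banach / minimax separation: after rescaling $a\mapsto\lambda a$, $b\mapsto\lambda^{-1}b$ and optimizing over $\lambda>0$, it suffices to dominate $(a,b)\mapsto\mathrm{Re}\,u(a,b)$ on the product of unit balls by an average of quadratic forms of the shapes $a\mapsto\tfrac12(f(aa^*)+f'(a^*a))$ and $b\mapsto\tfrac12(g(bb^*)+g'(b^*b))$ with $f,f',g,g'$ states. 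The analytic input that makes the separation succeed is a Hilbertian factorization of the associated map $\tilde u:\cA\to\cB^*$ -- equivalently, a Grothendieck-type bound on $u$ evaluated on sums -- and it is here that one uses $C^*$-structure essentially, via matrix amplification and the operator convexity of $t\mapsto t^2$; Pisier obtained this under an approximability hypothesis and Haagerup removed it in general.

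The step I expect to be the real obstacle is precisely this last one. The reason it lies strictly deeper than the commutative Grothendieck theorem is that \emph{both} $aa^*$ and $a^*a$ must be controlled at once, so one cannot pass to a maximal abelian subalgebra and quote the classical inequality; taming this ``row plus column'' asymmetry is what requires genuine operator-algebraic work. Since the rest of Section~\ref{sec:grothendieck} uses Theorem~\ref{thm:ncgt} only as an ingredient, I would not reproduce that argument here.
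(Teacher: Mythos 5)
The paper gives no proof of Theorem~\ref{thm:ncgt}: it is imported verbatim from~\cite{Haagerup85NCGT} as a black box, which is exactly what you propose, so your approach matches the paper's. Your supplementary derivation of the inequality $\nc{u}\leq 2\|u\|$ from Haagerup's factorization form (termwise bound, Cauchy--Schwarz, and the fact that states are positive of norm one, yielding $\sqrt{2}\cdot\sqrt{2}\,\|u\|$ under the paper's normalization of~\eqref{eq:sdpcons}) is correct, and you are appropriately explicit that the factorization form itself is the deep input you are citing rather than proving.
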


\begin{proof}[Proof of Corollary~\ref{cor:os-gt}]
The first inequality is~\eqref{eq:relaxation}. For the second inequality, let $\eps>0$ and $(x_i,y_i,t_i)_i$ finite sequences satisfying~\eqref{eq:sdpcons} and such that 
\begin{equation}\label{eq:os-val}
\Big|\sum_i u(x_i,y_i)\Big| \geq (1-\eps) \os{u}.
\end{equation}
By Theorem~\ref{thm:os-gt} for any $d$ there exists a unit vector $\Phi\in\C^d\otimes \C^d$ and sequences $(\tilde{x}_j)$, $(\tilde{y}_j)$ such that 
\begin{align*}
\Big| \sum_{j} \,u_d^{\Phi}(\tilde{x}_{j},\tilde{y}_{j}) \Big| &\geq \Big| \sum_i \,u({x}_i, {y}_i )\Big| \, - C\,\frac{\ln\big(1+\max_i\{t_i,t_i^{-1}\}\big)}{1+\ln d} \sum_i | u({x}_i, {y}_i ) |\\
&\geq \Big(1-\eps- C\,\frac{\ln\big(1+\max_i\{t_i,t_i^{-1}\}\big)}{1+\ln d}\Big)\,\os{u},
\end{align*}
where for the second inequality we use~\eqref{eq:os-val} and observe that for any numbers $\alpha_i$ of modulus $1$,
$(\alpha_i x_i,y_i,t_i)$ satisfies~\eqref{eq:sdpcons} and hence $\sum_i | u({x}_i, {y}_i ) | \le \os{u}$. By choosing $d \ge (1+\max_i\{t_i,t_i^{-1}\})^{C / \eps}$ we obtain
\begin{equation*}
(1-2\eps)\os{u}\,\leq\,\nc{u_d^\Phi} \leq 2\|u_d^\Phi\| \leq 2\|u\|_{jcb},
\end{equation*}
 where the first inequality holds since by~\eqref{eq:os-gt-cons} the $(\tilde{x}_i,\tilde{y}_i,t_i=1)$ satisfy~\eqref{eq:sdpcons}, the second inequality follows from Theorem~\ref{thm:ncgt}, and the third inequality follows from~\eqref{eq:uphi-norm}. Letting $\eps\to 0$ proves the second inequality in~\eqref{eq:cor-os-0}. 

For the ``moreover" part of the corollary, Claim~\ref{claim:maxt} below (with $E=\mathcal{A}$ and $F=\mathcal{B}$) shows that we can choose the sequence $(x_i,y_i,t_i)_{i}$ in a way that $\max_i \big\{ t_i,t_i^{-1} \big\} \leq 8\,\rc(\mathcal{A})\rc(\mathcal{B})/\eps$, which, together with the bound on $d$ shown above, leads to the estimate claimed in the corollary.  
\end{proof}

\begin{claim}\label{claim:maxt} 
Let $E\subseteq \mathcal{A}$, $F\subseteq \mathcal{B}$ be operator spaces such that $\rc(E),\rc(F)<\infty$. For any $u:E \times F \to\C$ and any $\eps>0$ there exists $(x_i,y_i,t_i)$ satisfying~\eqref{eq:sdpcons} such that $\max_i \big\{ t_i,t_i^{-1} \big\} \leq 8\rc(E)\rc(F)/\eps$ and 
$$ \Big|\sum_i u(x_i,y_i)\Big| \, \geq\, (1-\eps) \os{u}.$$
\end{claim}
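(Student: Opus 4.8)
The plan is to fix a sequence for $\os{u}$ that is within a factor $1-\eps/2$ of optimal and then discard those indices $i$ whose weight $t_i$ is extreme, arguing that their total contribution to $\big|\sum_i u(x_i,y_i)\big|$ is negligible. Set $\tau:=8\rc(E)\rc(F)/\eps$; we may assume $\os{u}<\infty$ and $0<\eps<1$ (for $\eps\geq1$ the triple $(x,y,t)=(0,0,1)$ works). Fix $(x_i,y_i,t_i)_{i\in I}$ satisfying~\eqref{eq:sdpcons} with $\big|\sum_{i\in I}u(x_i,y_i)\big|\geq(1-\eps/2)\os{u}$, and partition $I=I_b\sqcup I_m\sqcup I_s$ according to whether $t_i>\tau$, $\tau^{-1}\leq t_i\leq\tau$, or $t_i<\tau^{-1}$. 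Since dropping a summand from a sum of positive operators can only decrease each of the four operator norms in~\eqref{eq:sdpcons}, the subsequence indexed by $I_m$ again satisfies~\eqref{eq:sdpcons}, and it has $\max_{i\in I_m}\{t_i,t_i^{-1}\}\leq\tau$ by construction. So it suffices to prove $\big|\sum_{i\in I_b}u(x_i,y_i)\big|\leq\tfrac14\eps\,\os{u}$ and likewise for $I_s$, since then $\big|\sum_{i\in I_m}u(x_i,y_i)\big|\geq(1-\eps/2)\os{u}-\tfrac12\eps\,\os{u}=(1-\eps)\os{u}$.

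I will treat $I_b$; the bound for $I_s$ is obtained symmetrically by exchanging the ``column'' sums $\sum x^*x,\sum y^*y$ with the ``row'' sums $\sum xx^*,\sum yy^*$. On $I_b$ one has $t_i>\tau$, so $\big\|\sum_{I_b}x_i^*x_i\big\|\leq\tau^{-2}\big\|\sum_{I_b}t_i^2x_i^*x_i\big\|\leq2\tau^{-2}$, hence $\big\|\sum_{I_b}x_ix_i^*\big\|\leq2\rc(E)^2\tau^{-2}$ by the defining property of $\rc(E)$; symmetrically, $\big\|\sum_{I_b}y_i^*y_i\big\|\leq2$ forces $\big\|\sum_{I_b}y_iy_i^*\big\|\leq2\rc(F)^2$ and hence $\big\|\sum_{I_b}t_i^{-2}y_iy_i^*\big\|\leq2\rc(F)^2\tau^{-2}$, while the two remaining sums $\big\|\sum_{I_b}t_i^2x_i^*x_i\big\|$ and $\big\|\sum_{I_b}y_i^*y_i\big\|$ are at most $2$. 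Thus, of the four sums in~\eqref{eq:sdpcons}, two are $O(\rc^2/\tau^2)$ and two are $O(1)$, with exactly one small one on each side of the maximum. Now substitute $(x_i,y_i,t_i)\mapsto(\lambda x_i,\lambda^{-1}y_i,\nu t_i)$ for parameters $\lambda,\nu>0$: this preserves $\sum_{I_b}u(x_i,y_i)$ and multiplies the four sums (in the order of~\eqref{eq:sdpcons}) by $\lambda^2$, $\nu^2\lambda^2$, $\nu^{-2}\lambda^{-2}$, $\lambda^{-2}$, respectively. Choose $\lambda$ so as to equalize the first and last of these and then $\nu$ so as to equalize the middle two; each scaled sum then equals the geometric mean of one $O(1)$ quantity and one $O(\rc^2/\tau^2)$ quantity, i.e.\ is $O(\rc/\tau)$, and a short computation gives that both brackets of~\eqref{eq:sdpcons} become at most $2\big(\rc(E)+\rc(F)\big)/\tau\leq4\rc(E)\rc(F)/\tau=:2\delta^2$ (using $\rc(E),\rc(F)\geq1$). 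Consequently $(\delta^{-1}\lambda x_i,\delta^{-1}\lambda^{-1}y_i,\nu t_i)_{i\in I_b}$ satisfies~\eqref{eq:sdpcons} outright, so the definition of $\os{u}$ gives $\delta^{-2}\big|\sum_{I_b}u(x_i,y_i)\big|\leq\os{u}$, that is, $\big|\sum_{I_b}u(x_i,y_i)\big|\leq\delta^2\os{u}=\tfrac{2\rc(E)\rc(F)}{\tau}\os{u}=\tfrac14\eps\,\os{u}$.

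The only step that goes beyond bookkeeping is the two-parameter rescaling: one must notice that $x_i\mapsto\lambda x_i,\ y_i\mapsto\lambda^{-1}y_i$ preserves the value of $u$ while trading ``$x$-mass'' against ``$y$-mass'', that $t_i\mapsto\nu t_i$ independently trades the $t$-weighted sums against the unweighted ones, and that on $I_b$ the $\rc$ bounds place exactly one small sum on each side of~\eqref{eq:sdpcons}, so each small sum can absorb the large one paired with it. Everything else is elementary: monotonicity of the operator norm under dropping positive summands, the scalar comparisons $t_i^2\geq\tau^2$ and $t_i^{-2}\leq\tau^{-2}$ on $I_b$, and the homogeneity estimates $\big\|\sum x_ix_i^*\big\|\leq\rc(E)^2\big\|\sum x_i^*x_i\big\|$ (and its transpose), which follow directly from the definition of $\rc$. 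Degenerate cases in which one of the sums over $I_b$ vanishes — e.g.\ $x_i=0$ for all $i\in I_b$ — make the $I_b$ contribution trivially zero and can be excluded at the outset.
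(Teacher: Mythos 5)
Your proof is correct and follows essentially the same route as the paper: isolate the indices with extreme $t_i$, use the defining inequality of $\rc$ to control the ``wrong-side'' sums on that set, rescale to obtain a sequence feasible for~\eqref{eq:sdpcons}, and conclude that the discarded contribution is at most $(\eps/2)\os{u}$. The only cosmetic differences are that you treat $I_b$ and $I_s$ separately and optimize the rescaling via a two-parameter geometric-mean balancing, whereas the paper handles $S_1\cup S_2$ at once with explicit scalings $T x_i/(2\rc(E))$, $y_i/(2\rc(F))$ and passes through $\nc{u}\le\os{u}$; both yield the same constant $8\rc(E)\rc(F)/\eps$.
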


\begin{proof} 
Let $(x_i,y_i,t_i)$ be a sequence satisfying the constraint~\eqref{eq:sdpcons} and such that
$$ \Big| \sum_i u(x_i,y_i) \Big| \, \geq\, (1-\eps/2) \os{u}.$$
Let $T = 8\, \rc(E)\rc(F)/\eps >1$, and define $S_1 = \{i:\, t_i \geq T\}$ and $S_2 = \{i:\, t_i^{-1} \geq T\}$. Note that $S_1$ and $S_2$ are disjoint, and let $S = S_1\cup S_2$. For every $i\in S_1$ (resp. $i\in S_2$) let $\tilde{x}_i = T\,x_i /(2\rc(E))$ and $\tilde{y}_i = y_i/(2\rc(F))$ (resp. $\tilde{x}_i = x_i/(2\rc(E))$ and $\tilde{y}_i = T\, y_i/(2\rc(F))$). We have
$$ \Big\| \sum_{i\in S_1} \tilde{x}_i\tilde{x}_i^* \Big\| + \Big\| \sum_{i\in S_1} \tilde{x}_i^*\tilde{x}_i \Big\|\,\leq\, \frac{T^2}{2} \Big\| \sum_{i\in S_1} x_i^* x_i\Big\| \,\leq\, 1,$$
where for the first inequality we used the definition of $\rc(E)$ to upper bound the first term, and the second inequality follows from the constraint~\eqref{eq:sdpcons} and the definition of $S_1$. Similarly,
$$ \Big\| \sum_{i\in S_1} \tilde{y}_i\tilde{y}_i^* \Big\| + \Big\| \sum_{i\in S_1} \tilde{y}_i^*\tilde{y}_i \Big\|\,\leq\, \frac{1}{2} \Big\| \sum_{i\in S_1} y_i^* y_i\Big\|\,\leq\, 1$$
by~\eqref{eq:sdpcons}, and similar inequalities hold for $S_2$. Together these bounds imply that $(\tilde{x}_i,\tilde{y}_i,\tilde{t}_i=1)_{i\in S}$ satisfies~\eqref{eq:sdpcons}. Hence it must be that
$$ \Big| \sum_{i\in S} u(x_i,y_i) \Big|\,=\, 4\frac{\rc(E)\rc(F)}{T}\, \Big| \sum_{i\in S} u(\tilde{x}_i,\tilde{y}_i) \Big| \,\leq \, (\eps/2)\, \nc{u}\,\leq\, (\eps/2)\, \os{u},$$ 
where the first inequality uses the definition of $T$. Hence
$$ \Big|\sum_{i\notin S} u(x_i,y_i)\Big| \, \geq\, \Big| \sum_{i} u(x_i,y_i) \Big| -(\eps/2) \os{u}\,\geq\, (1-\eps)\os{u},$$
which proves the claim by restricting the initial sequence $(x_i,y_i,t_i)$ to those $i\notin S$.  
\end{proof}

\subsection{Forms on exact operator spaces}\label{sec:ps}

Our second corollary applies to completely bounded forms defined on operator spaces that are exact. This reproves the main result of~\cite{PS02OSGT}. As before, we also obtain a new quantitative estimate. To state the corollary, following~\cite[Section~16]{PisierGT} for a finite-dimensional operator space $E$ and integer $n$ we define
$$ \exactness_n(E)\,:=\, \inf\big\{ d_{cb}(E,F)\,|\, F\subseteq M_n\big\},$$
where $d_{cb}(E,F)$ is defined as the infimum of $\|v\|_{cb}\|v^{-1}\|_{cb}$ over all isomorphisms $v:E\to F$, and 
$$\exactness(E) \,:=\, \sup\big\{ \inf_n \exactness_n(E_1)\,|\, E_1\subseteq E,\,\dim(E_1)<\infty\big\}.$$

\begin{corollary}\label{cor:exact-gt}
Let $\mathcal{A},\mathcal{B}$ be $C^*$-algebras, $E\subseteq \cA$, $F\subseteq \cB$ operator spaces, and $u:E\times F\to \C$ a jointly completely bounded bilinear form. Then 
\begin{equation}\label{eq:exact-gt}
\|u\|_{jcb}\,\leq\,\os{ u } \,\leq\, 4\exactness(E)\exactness(F)\,\| u\|_{jcb}.
\end{equation}
Moreover, if $E,F$ are finite dimensional then for any $\eps>0$, $n \ge 1$, $d \ge (2\rc(E)\rc(F)/\eps)^{C/\eps}$ and $d'\geq C'\eps^{-2}\ln (nd)$, where $C,C'>0$ are universal constants, 
$$ (1-\eps) \os{ u } \,\leq\, 4 \exactness_n(E)\exactness_n(F)\,\| u_{dd'}^{\Phi\otimes\Psi} \|\,\leq\, 4 \exactness_n(E)\exactness_n(F)\,\|u_{d d'}\|,$$
where $\Phi = \Phi_d$, $\Psi = \Psi_{d'}$ are as defined in~\eqref{eq:def-phi} and~\eqref{eq:def-psi} respectively.
\end{corollary}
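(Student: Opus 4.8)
The plan is to derive Corollary~\ref{cor:exact-gt} from Theorem~\ref{thm:os-gt} by the same two-step reduction used for Corollary~\ref{cor:os-gt}, but now passing through the \emph{tracial} non-commutative Grothendieck theorem (as in \cite{JungeP95}) rather than the bounded version, because exactness only lets us compare $\|\cdot\|_{jcb}$ with $\|\cdot\|_{tb}$ up to the exactness constants. Concretely, I would first recall the relevant estimate that for an operator space $E$ with $\exactness(E)<\infty$ (or $\exactness_n(E)$ in the finite-dimensional quantitative statement), a jointly completely bounded form behaves like a tracially bounded one on the appropriate finite-dimensional pieces: one has $\|u\|_{jcb}\le\exactness(E)\exactness(F)\,\|u\|_{tb}$, and moreover a $d$-dimensional amplification $u_d$ can be compared to its evaluation against a maximally entangled state $\Psi_{d'}$ at the cost of a $(1+\eps)$ factor once $d'$ is polynomially large in $\eps^{-1}\ln(nd)$ — this is exactly where the parameter $d'\ge C'\eps^{-2}\ln(nd)$ and the Johnson--Lindenstrauss-type dimension reduction enter.

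The argument itself would proceed as follows. Fix $\eps>0$ and, using Claim~\ref{claim:maxt} (with these $E,F$), choose sequences $(x_i,y_i,t_i)$ satisfying~\eqref{eq:sdpcons} with $\max_i\{t_i,t_i^{-1}\}\le 8\rc(E)\rc(F)/\eps$ and $|\sum_i u(x_i,y_i)|\ge(1-\eps)\os{u}$. Apply Theorem~\ref{thm:os-gt} with this $d\ge(2\rc(E)\rc(F)/\eps)^{C/\eps}$ to obtain $\Phi=\Phi_d$ and sequences $(\tilde x_j)$ in $E\otimes M_d$, $(\tilde y_j)$ in $F\otimes M_d$ with $(\tilde x_j,\tilde y_j,1)$ satisfying~\eqref{eq:sdpcons} and $|\sum_j u_d^\Phi(\tilde x_j,\tilde y_j)|\ge(1-2\eps)\os{u}$, just as in the proof of Corollary~\ref{cor:os-gt}. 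Now view $u_d^\Phi$ as a bilinear form on the operator spaces $E\otimes M_d\subseteq\mathcal A\otimes M_d$ and $F\otimes M_d\subseteq\mathcal B\otimes M_d$ (with $\exactness_n$ behaving multiplicatively, so these have exactness at most $\exactness_n(E)$ and $\exactness_n(F)$ since $M_d$ is exact with constant $1$), and invoke the tracial NCGT to get $\nc{u_d^\Phi}\le\ 2\,\exactness_n(E)\exactness_n(F)\,\|(u_d^\Phi)\|_{tb}$ — actually more carefully $\nc{v}\le 2\|v\|_{tb}$ for bounded $v$, and then $\|v\|_{tb}\le \exactness_n\cdot\|v\|$ type bounds combine to give the factor $4\exactness_n(E)\exactness_n(F)$ after also using that $(\tilde x_j,\tilde y_j,1)$ satisfy~\eqref{eq:sdpcons} so that $\nc{u_d^\Phi}$ upper-bounds $|\sum_j u_d^\Phi(\tilde x_j,\tilde y_j)|$. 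Finally, the passage from $\|u_d^\Phi\|_{tb}$, which is a supremum of evaluations against maximally entangled states $\Psi_{d'}$ over all $d'$, to a single $\|u_{dd'}^{\Phi\otimes\Psi}\|$ with $d'\ge C'\eps^{-2}\ln(nd)$ is handled by the dimension-reduction estimate: since the relevant operators live in spaces of dimension at most $nd$ (using $E\cong F'\subseteq M_n$ up to $\exactness_n$), one can replace the ambient $\Psi$ by one of dimension $d'$ polylogarithmic in $nd$ while changing the value by at most a factor $(1+\eps)$. Absorbing all the $(1\pm O(\eps))$ factors and letting $\eps\to0$ in the non-quantitative part yields both displays of the corollary, with $\exactness(E)=\sup_{E_1}\inf_n\exactness_n(E_1)$ giving~\eqref{eq:exact-gt} by a direct limiting/net argument over finite-dimensional subspaces.

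I expect the main obstacle to be the second reduction — replacing the supremum over all maximally entangled states defining $\|\cdot\|_{tb}$ by evaluation against the single fixed state $\Psi_{d'}$ of controlled dimension. This requires a quantitative dimension-reduction (Johnson--Lindenstrauss / concentration) argument showing that a random $d'$-dimensional ``compression'' of the maximally entangled state preserves the bilinear value of $u_{dd'}^{\Phi\otimes\Psi}$ up to $(1+\eps)$, and one has to check that the number of vectors whose pairwise inner products must be controlled is at most polynomial in $nd$ — which is why exactness must be invoked \emph{before} this step, to embed $E,F$ into $M_n$ and thereby bound the effective dimension. The interplay of the three dimension parameters $n$ (exactness embedding), $d$ (embezzlement/Theorem~\ref{thm:os-gt}), and $d'$ (tracial reduction) and the bookkeeping of which operator norm each factor of $2$ or $\exactness_n$ comes from is the delicate part; the rest is a routine chaining of the inequalities already assembled in Section~\ref{sec:hm}.
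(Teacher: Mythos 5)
Your overall plan coincides with the paper's: use Claim~\ref{claim:maxt} to bound $\max_i\{t_i,t_i^{-1}\}\le 8\rc(E)\rc(F)/\eps$, apply Theorem~\ref{thm:os-gt} with $d\ge(1+\max_i\{t_i,t_i^{-1}\})^{C/\eps}$ to get $(1-2\eps)\os{u}\le\nc{u_d^\Phi}$, then invoke the Junge--Pisier tracially bounded Grothendieck theorem for $u_d^\Phi$ viewed on $E\otimes M_d$ and $F\otimes M_d$ (using $\exactness_{nd}(E\otimes M_d)\le\exactness_n(E)$), and close with $\|u_d^\Phi\|_{tb}\le\|u_d^\Phi\|_{jcb}=\|u\|_{jcb}$.

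Two points need fixing. First, your ``more careful'' decomposition of the constant is not correct: the inequality $\nc{v}\le 2\|v\|_{tb}$ is false for forms on general (even exact) operator spaces --- if it held, the exactness factors in Theorem~\ref{thm:jungepisier} would be superfluous --- and $\|v\|_{tb}\le \exactness_n(E)\,\|v\|$ is not the mechanism either. The correct black box is the single inequality $\nc{v}\le 4\exactness(E)\exactness(F)\|v\|_{tb}$ of Theorem~\ref{thm:jungepisier}, cited as a whole; there is no intermediate factor-$2$ statement to prove, and attempting your split would fail. Second, the ``dimension reduction'' you flag as the main obstacle is precisely the quantitative ``moreover'' of Theorem~\ref{thm:jungepisier}, which the paper establishes in Appendix~\ref{apx:quantitative}: it is not a Johnson--Lindenstrauss control of pairwise inner products but a Haagerup--Thorbjornsen-type operator-norm concentration bound for Gaussian sums $\sum_i v(a_i)\otimes G_i$, with the $\ln(nd)$ in the requirement on $d'$ coming from the ambient matrix dimension $nd$ after the exactness embedding --- your intuition that exactness must be invoked before this step to control the effective dimension is right. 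With Theorem~\ref{thm:jungepisier} used in its stated form (both parts), the remainder of your chaining matches the paper's argument.
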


We note that the result from~\cite{PS02OSGT} is in fact slightly stronger, as it proves that inequality~\eqref{eq:exact-gt} still holds for a variant of $\os{u}$ in which the constraint~\eqref{eq:sdpcons} is replaced by the potentially looser constraint 
\begin{equation}\label{eq:l1-cons}
\max\Big\{\,\Big\| \sum_i  \,x_i x_i^* \Big\|^{1/2} + \Big\|\sum_i  t_i^2\, x_i^* x_i \Big\|^{1/2}, \, \Big\|\sum_i  t_i^{-2}\,y_i y_i^* \Big\|^{1/2} + \Big\|\sum_i  \,y_i^* y_i \Big\|^{1/2}\,\Big\}\,\leq\,2.
\end{equation}
Corollary~\ref{cor:exact-gt} (including the quantitative estimate) also holds in this stronger form, as follows from a straightforward modification of the proof. The main observation is that Theorem~\ref{thm:os-gt} operates on each of the four terms in~\eqref{eq:sdpcons} separately, and hence applies equally well to the modified constraint~\ref{eq:l1-cons}. For convenience we prove the corollary in the form stated above. 

To prove Corollary~\ref{cor:exact-gt} we will use Theorem~\ref{thm:os-gt} to perform a reduction to a Grothendieck inequality due to Junge and Pisier~\cite{JungeP95} which applies to the case of tracially bounded bilinear forms. 
We state the main result in~\cite{JungeP95} as it appears in~\cite[Section 16]{PisierGT} where an alternative proof is given (based on~\cite{HT99}).  The ``moreover'' part of the theorem follows from that alternative proof, and we include the proof in Appendix~\ref{apx:quantitative}.

\begin{theorem}[\cite{JungeP95}]\label{thm:jungepisier}
For any tracially bounded bilinear form $u:E\times F \to \C$ on exact operator spaces,
\begin{equation}\label{eq:ps}
\nc{u}\,\leq\, 4\exactness(E)\exactness(F)\,\|u\|_{tb}.
\end{equation}
Moreover, if $E,F$ are finite dimensional then for any $\eps>0$, $n \ge 1$, and $d\ge  128\eps^{-2}\ln (8n/\eps)$, 
$$(1-\eps)\nc{u}\,\leq\, 4 \exactness_n(E)\exactness_n(F)\,\|u_d^\Psi\|,$$
where $\Psi=\Psi_d$ is as defined in~\eqref{eq:def-psi}. 
\end{theorem}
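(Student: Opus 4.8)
The plan is to transfer the statement to one about forms on the matrix algebra $M_n$ using the definition of exactness, and then to settle that case by a randomized amplification adapted to the maximally entangled state $\Psi_d$ — in the same spirit as the ``line matrix'' construction behind Theorem~\ref{thm:os-gt}, but with the embezzlement state $\Phi$ replaced by $\Psi$. Since $\nc u$ is a supremum over finite sequences, I would first reduce to finite-dimensional $E,F$ (replace $E$, $F$ by the spans of finitely many $x_i$, $y_i$ nearly achieving $\nc u$); in the ``moreover'' regime $E,F$ are already finite-dimensional. Then, fixing $\delta>0$, I would use the definition of $\exactness_n$ to pick cb-isomorphisms $v\colon E\to\tilde E\subseteq M_n$, $w\colon F\to\tilde F\subseteq M_n$ (padding to a common $M_n$), normalized so that $\|v^{-1}\|_{cb}=\|w^{-1}\|_{cb}=1$ and $\|v\|_{cb}\le(1+\delta)\exactness_n(E)$, $\|w\|_{cb}\le(1+\delta)\exactness_n(F)$, and set $\tilde u:=u\circ(v^{-1}\times w^{-1})$ on $\tilde E\times\tilde F$. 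Two routine functoriality facts, immediate from~\eqref{eq:defjcb}--\eqref{eq:def-vd}, make this work: (i) since $v^{-1},w^{-1}$ are complete contractions, $|\tilde u_d^{\Psi}(A,B)|\le\|u_d^{\Psi}\|\,\|A\|\,\|B\|$ for all $A\in\tilde E\otimes_{\min}M_d$, $B\in\tilde F\otimes_{\min}M_d$, the norms being the operator norms in $M_{nd}$; and (ii) if $(x_i)\subseteq E$ satisfies $\|\sum_i x_ix_i^*\|+\|\sum_i x_i^*x_i\|\le2$, then $a_i:=v(x_i)\in M_n$ satisfies $\|\sum_i a_ia_i^*\|^{1/2},\|\sum_i a_i^*a_i\|^{1/2}\le\sqrt2\,\|v\|_{cb}$ (and likewise on the $F$-side), because row and column operator-space norms are cb-functorial. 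So it suffices to bound $\bigl|\sum_i u(x_i,y_i)\bigr|=\bigl|\sum_i\tilde u(a_i,b_i)\bigr|$ by a constant times $\exactness_n(E)\exactness_n(F)\,\|u_d^{\Psi}\|$.

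The core step is a random amplification of the $a_i$ into $M_n\otimes M_d=M_{nd}$ tuned to $\Psi_d$. I would take $g_1,\dots,g_m$ i.i.d.\ complex Gaussian $d\times d$ matrices with entries of variance $1/d$, so that, using $\langle\Psi_d,(x\otimes y)\Psi_d\rangle=d^{-1}\Tr(xy^t)$, one has $\mathbb{E}\,\langle\Psi_d,(g_i\otimes\overline{g_j})\Psi_d\rangle=d^{-1}\mathbb{E}\,\Tr(g_ig_j^*)=\delta_{ij}$. Setting $A:=\sum_i a_i\otimes g_i$ and $B:=\sum_i b_i\otimes\overline{g_i}$ (note $\|\overline{g_i}\|=\|g_i\|$), we get $\mathbb{E}\,\tilde u_d^{\Psi}(A,B)=\sum_{i,j}\tilde u(a_i,b_j)\,\mathbb{E}\langle\Psi_d,(g_i\otimes\overline{g_j})\Psi_d\rangle=\sum_i\tilde u(a_i,b_i)$, while (i) gives $|\tilde u_d^{\Psi}(A,B)|\le\|u_d^{\Psi}\|\,\|A\|_{M_{nd}}\|B\|_{M_{nd}}$. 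The real content is therefore a concentration estimate: with high probability $\|A\|_{M_{nd}}$ is within a $(1+\eps)$-factor of its generic value, which by the non-commutative Khintchine / free-probability asymptotics for square Gaussian matrices (Haagerup--Thorbjørnsen) is $O(1)$ times $\max\{\|\sum_i a_ia_i^*\|^{1/2},\|\sum_i a_i^*a_i\|^{1/2}\}\le O(\|v\|_{cb})$, and similarly for $\|B\|_{M_{nd}}$; simultaneously $\tilde u_d^{\Psi}(A,B)$ should stay within $\eps$ of its mean. The way to get $d$ only logarithmic in $n$ is the moment method: estimating $\mathbb{E}\|A\|^p_{M_{nd}}$ via a trace of a power costs only a polynomial $(nd)$-factor, absorbed by taking $p\sim\ln(nd)$, so the overshoot is $O(\sqrt{\ln(nd)/d})$, and Gaussian concentration of measure for the Lipschitz map $g\mapsto\|A\|$ upgrades this to a tail bound; a union bound over the two norms and over the value $\tilde u_d^\Psi(A,B)$ then produces a single realization with all three quantities good once $d\ge C\eps^{-2}\ln(n/\eps)$. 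Chasing the normalizations (the constant $2$ in~\eqref{eq:sdpcons} and the two Gaussian row/column bounds) turns this into $\|A\|\|B\|\le 4/(1-\eps)$ up to lower-order terms, hence $\bigl|\sum_i\tilde u(a_i,b_i)\bigr|\le\frac{4}{1-\eps}\|u_d^{\Psi}\|$.

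Assembling everything, $\bigl|\sum_i u(x_i,y_i)\bigr|\le\|v\|_{cb}\|w\|_{cb}\cdot\frac{4}{1-\eps}\|u_d^{\Psi}\|\le(1+\delta)^2\exactness_n(E)\exactness_n(F)\cdot\frac{4}{1-\eps}\|u_d^{\Psi}\|$; taking the supremum over the defining sequences, letting $\delta\to0$, and absorbing $(1-\eps)^{-1}$ into a slightly larger $\eps$ gives the ``moreover'' inequality $(1-\eps)\nc u\le 4\,\exactness_n(E)\exactness_n(F)\,\|u_d^{\Psi}\|$ for $d\ge128\eps^{-2}\ln(8n/\eps)$, once the explicit constants in the concentration step are tracked. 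The first inequality $\nc u\le 4\,\exactness(E)\exactness(F)\,\|u\|_{tb}$ then follows by additionally letting $d\to\infty$ (so $\|u_d^{\Psi}\|\le\|u\|_{tb}$) and taking $\inf_n$ over the finite-dimensional subspaces of $E$ and $F$, so that $\exactness_n(E)\exactness_n(F)\to\exactness(E)\exactness(F)$.

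\emph{The main obstacle} is exactly the operator-norm concentration in the second paragraph: showing that $\|\sum_i a_i\otimes g_i\|_{M_{nd}}$, for $a_i\in M_n$ with bounded row/column norms, is within a $(1+\eps)$-factor of its limiting value with probability $1-O(\eps)$ for $d$ only \emph{logarithmic} in $n$ — this is precisely why the hypothesis reads $d\ge128\eps^{-2}\ln(8n/\eps)$ and not $d\gtrsim n$, and it is where the quantitative Haagerup--Thorbjørnsen-type input is really needed. Everything else — the reduction to finite dimensions and to $M_n$, the cb-functoriality of the $tb$-, $cb$- and row/column norms, the expectation identity for $\Psi_d$, and the normalization bookkeeping that yields the constant $4$ — is routine.
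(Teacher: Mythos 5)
Your setup matches the paper's: the same Gaussian amplification $x=\sum_i x_i\otimes G_i$, $y=\sum_i y_i\otimes \overline{G_i}$ tuned to $\Psi_d$ via $\mathbb{E}\,d^{-1}\Tr(G_iG_j^*)=\delta_{ij}$, exactness used to transfer operator-norm estimates to $M_n$, and a Haagerup--Thorbjørnsen-type bound on the norm of a Gaussian sum with matrix coefficients (this is exactly Claim~\ref{claim:ht} and Corollary~\ref{cor:ht} in the appendix). However, your final step contains a genuine gap. You propose a high-probability argument: concentrate $\|A\|$ and $\|B\|$ around their generic values \emph{and}, simultaneously, concentrate the value $\tilde u_d^{\Psi}(A,B)$ within $\eps$ of its mean, then take a union bound to extract one good realization. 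The norm concentrations are fine (the maps are $O(d^{-1/2})$-Lipschitz in the Gaussians), but the concentration of $\tilde u_d^{\Psi}(A,B)=d^{-1}\sum_{i,j}\tilde u(a_i,b_j)\Tr(g_ig_j^*)$ is an unjustified and problematic claim: this is a Gaussian chaos of order two whose fluctuation scale (e.g.\ via Hanson--Wright) is governed by $d^{-1}\bigl(\sum_{i,j}|\tilde u(a_i,b_j)|^2\bigr)^{1/2}$, and the off-diagonal values $\tilde u(a_i,b_j)$, $i\neq j$, over sequences of unbounded length are not controlled by the constraint~\eqref{eq:sdpcons}; the generic bound one gets is of order $n\|u\|/d$, which is useless when $d$ is only logarithmic in $n$. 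So the step on which your whole ``union bound over three quantities'' rests is neither proved nor obviously true at the claimed scale $d\gtrsim \eps^{-2}\ln n$.

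The paper's proof shows this entire concentration apparatus is unnecessary. Since $|u_d^{\Psi}(x,y)|\le \|u_d^{\Psi}\|\,\|x\|\,\|y\|$ holds pointwise for every realization of the Gaussians, one simply writes
\begin{equation*}
\Bigl|\sum_i u(x_i,y_i)\Bigr| \,=\, \bigl|\mathbb{E}\,[u_d^{\Psi}(x,y)]\bigr| \,\le\, \|u_d^{\Psi}\|\;\mathbb{E}\,[\|x\|\|y\|] \,\le\, \|u_d^{\Psi}\|\,\bigl(\mathbb{E}\,\|x\|^2\;\mathbb{E}\,\|y\|^2\bigr)^{1/2},
\end{equation*}
and the only probabilistic input needed is the \emph{second-moment} bound $\mathbb{E}\,\|x\|^2\le (1+\eps)\,\exactness_n(E)^2(\sqrt{\gamma}+1)^2$ of Corollary~\ref{cor:ht}, which follows directly from the Haagerup--Thorbjørnsen exponential moment estimate; no lower bound on the value at a specific realization, and hence no concentration of the chaos, is ever required. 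If you replace your second paragraph by this averaging argument (and, as the paper does, run the quantitative estimate with the square-root constraint~\eqref{eq:l1-cons}, which is where the clean constant $4$ comes from, rather than hand-waving the normalization), the rest of your outline---the reduction to finite dimensions, the use of exactness, and the limits $d\to\infty$, $\inf_n$ for the qualitative inequality~\eqref{eq:ps}---goes through.
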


As before, we note that the result from~\cite{JungeP95} is in fact slightly stronger and proves that inequality~\eqref{eq:ps} still holds for the variant of $\nc{u}$ in which the constraint~\eqref{eq:sdpcons} is replaced by~\eqref{eq:l1-cons} (with $t_i=1$).

\begin{proof}[Proof of Corollary~\ref{cor:exact-gt}]
The proof follows along the same lines as that of Corollary~\ref{cor:os-gt}. As before, the first inequality is~\eqref{eq:relaxation}. For the second inequality, let $\eps>0$ and $(x_i,y_i,t_i)_i$ satisfying~\eqref{eq:sdpcons} and such that 
\begin{align*}%\label{eq:ex-val}
\Big\|\sum_i u(x_i,y_i)\Big\| \geq (1-\eps) \os{u}.
\end{align*}
As in the proof of Corollary~\ref{cor:os-gt}, by Theorem~\ref{thm:os-gt} there exists sequences $(\tilde{x}_j)$, $(\tilde{y}_j)$, and for any $d$ a unit vector $\Phi\in\C^d\otimes \C^d$ such that 
\begin{align*}
\Big| \sum_{j} \,u_d^{\Phi}(\tilde{x}_{j},\tilde{y}_{j}) \Big| 
&\geq \Big(1-\eps- C\,\frac{\ln\big(1+\max_i\{t_i,t_i^{-1}\}\big)}{1+\ln d}\Big)\,\os{u}.
\end{align*}
By choosing $d \ge (1+\max_i\{t_i,t_i^{-1}\})^{C / \eps}$, we obtain
\begin{equation*}
(1-2\eps)\os{u}\leq\nc{u_d^\Phi}\leq  4\exactness(E)\exactness(F)\,\|u_d^\Phi\|_{tb}\leq 4\exactness(E)\exactness(F)\|u\|_{jcb},
\end{equation*}
 where the first inequality holds since by~\eqref{eq:os-gt-cons} the $(\tilde{x}_i,\tilde{y}_i,t_i=1)$ satisfy~\eqref{eq:sdpcons}, the second follows from applying Theorem~\ref{thm:jungepisier} to $u_d^\Phi : E\otimes M_d \times F\otimes M_d\to \C$ (and using that for any $d$ it holds that $\exactness(E\otimes M_d)\leq\exactness(E)$, and similarly for $F$), and the third inequality follows from~\eqref{eq:uphi-norm}. We complete the proof by letting $\eps\to 0$.

For the ``moreover" part of the corollary, using the quantitative statement in Theorem~\ref{thm:jungepisier}, for any $d\geq 1$, if $d'\geq  128\eps^{-2}\ln (8nd/\eps)$ then 
$$(1-\eps)\nc{u_d^\Phi}\,\leq\, 4\exactness_{nd}(E\otimes M_d)\exactness_{nd}(F\otimes M_d)\,\big\|u_{dd'}^{\Phi\otimes \Psi}\big\|\,\leq\, 4 \exactness_n(E)\exactness_n(F) \big\|u_{dd'}^{\Phi\otimes \Psi}\big\|.$$
Claim~\ref{claim:maxt} shows that we can choose the sequence $(x_i,y_i,t_i)_{i}$ such that $\max_i \big\{ t_i,t_i^{-1} \big\} \leq 8\,\rc(E)\rc(F)/\eps$. Together with the bound on $d$ shown above, we obtain the estimate claimed in the corollary.  
\end{proof}

\appendix

\section{Omitted proofs}\label{app:additional}

\subsection{Upper bounds on norms}\label{app:ub-norms}

Let $\cA,\cB$ be $C^*$-algebras, $E\subseteq\cA$, $F\subseteq\cB$ operator spaces, and $u:E \times F \to \C$ a bilinear form. In this section we prove the inequalities
\begin{align}\label{eq:normbounds}
 \|u\|_{tb} \,\leq\, \nc{u} ~~~\mbox{and}~~~ \|u\|_{jcb} \,\leq\, \os{u},
\end{align}
starting with the second one.
Let $\eps>0$, and $d\geq 1$ an integer, $\Omega,\Omega'\in\C^d \otimes \C^d$ unit vectors,  $a=\sum a_i \otimes x_i \in E\otimes M_d$, $b=\sum b_i\otimes y_i\in F\otimes M_d$ such that $\|a\|_{\min}\leq 1$, $\|b\|_{\min}\leq 1$, and 
\begin{equation}\label{eq:lb-1}
\Big| \Big\langle\Omega,\,\Big(\sum_{i,j} u(a_i,b_j) \, x_i\otimes y_j\Big)\,\Omega'\Big\rangle \Big| \geq (1-\eps)\|u_d\|.
\end{equation}
Write $\Omega = \sum_i \lambda_i\, e_i\otimes f_i$, $\Omega' = \sum_i \mu_i \, g_i\otimes h_i$, for some orthonormal families $\{e_i\}$, $\{f_i\}$, $\{g_i\}$, $\{h_i\}$ and positive reals $\lambda_i,\mu_i$, and define $t_{i,j} := \frac{\mu_j}{\lambda_i}$,
$$ \tilde{x}_{i,j} \,:=\, \lambda_i\,\sum_k \langle e_i,\, x_k g_j\rangle \,a_k \quad\text{and}\quad \tilde{y}_{i,j} \,:=\, \mu_j \, \sum_k \langle f_i, y_k h_j \rangle\, b_k.$$
Then 
\begin{align*}
\Big\| \sum_{i,j}  \,\tilde{x}_{i,j} \tilde{x}_{i,j}^* \Big\| &= \Big\| \sum_{i,j}  \lambda_i^2\, \Big(\sum_k \langle e_i,\, x_k g_j\rangle\,a_k\Big)\Big(\sum_k \langle g_j,\, x_k^* e_i\rangle\,a_k^*\Big) \Big\|\\
&\leq \sum_{i} \lambda_i^2 \Big\|\Big(\sum_k a_k \otimes x_k\Big)\Big(\sum_k a_k \otimes x_k\Big)^*\Big\|_{\min} \\
&= \big\|a\big\|_{\min}^2 \,\leq\,1.
\end{align*}
Similar bounds can be proven for the three other terms appearing in~\eqref{eq:sdpcons}, so that $(\tilde{x}_{i,j},\tilde{y}_{i,j},t_{i,j})$ satisfies the constraint~\eqref{eq:sdpcons}. One immediately checks from the definition that
$$ \sum_{i,j}\, u(\tilde{x}_{i,j},\tilde{y}_{i,j})\,=\,\Big\langle\Omega,\,\Big(\sum_{i,j} u(a_i,b_j) \cdot x_i\otimes y_j\Big)\,\Omega'\Big\rangle,$$
hence by~\eqref{eq:lb-1} we have $\os{u} \geq (1-\eps)\|u_d\|$. Taking the limit as $\eps\to 0$ and $d\to\infty$ proves the second inequality in~\eqref{eq:normbounds}. For the first it suffices to recall that in the tracially bounded case $\Omega=\Omega'=\Psi_d$, so $\lambda_i = \mu_j = d^{-1/2}$ for every $i,j$, and therefore $t_{i,j}=1$. 

\subsection{Upper bounds on \texorpdfstring{$\rc$}{eta-RC}}\label{sec:eta-bound}

\begin{claim}\label{clm:etacbdist}
For any operator spaces $E$ and $F$, $\rc(E) \le d_{cb}(E,F) \rc(F)$.
\end{claim}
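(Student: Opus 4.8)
The plan is to work directly from the Banach-space description of $\rc$ recalled in the footnote: $\rc(E)$ is the larger of the norms of the two natural ``flip'' maps $C\otimes_{\min} E \to R\otimes_{\min} E$ and $R\otimes_{\min} E \to C\otimes_{\min} E$, where a finite sequence $(x_i)_i$ corresponds to the element $\sum_i e_{i1}\otimes x_i$ on the column side and $\sum_i e_{1i}\otimes x_i$ on the row side, with norms $\|\sum_i x_i^* x_i\|^{1/2}$ and $\|\sum_i x_i x_i^*\|^{1/2}$ respectively. The key elementary fact I would use is that any completely bounded map $v:E\to F$ induces, for every $n$, maps $\Id_{R_n}\otimes v : R_n\otimes_{\min} E \to R_n\otimes_{\min} F$ and $\Id_{C_n}\otimes v : C_n\otimes_{\min} E \to C_n\otimes_{\min} F$ of norm at most $\|v\|_{cb}$; this holds because $R_n, C_n\subseteq M_n$ completely isometrically and $\otimes_{\min}$ is injective, so these maps are restrictions of $\Id_{M_n}\otimes v$.

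Now fix a complete isomorphism $v:E\to F$. Given a sequence $(x_i)_i$ in $E$ with $\|\sum_i x_i x_i^*\|\leq 1$, set $y_i = v(x_i)$. Applying the displayed fact on the row side gives $\|\sum_i y_i y_i^*\|^{1/2}\leq \|v\|_{cb}$; the definition of $\rc(F)$ then yields, after rescaling, $\|\sum_i y_i^* y_i\|^{1/2}\leq \|v\|_{cb}\,\rc(F)$; and applying $v^{-1}$ on the column side gives $\|\sum_i x_i^* x_i\|^{1/2} = \|\sum_i v^{-1}(y_i)^* v^{-1}(y_i)\|^{1/2}\leq \|v^{-1}\|_{cb}\|v\|_{cb}\,\rc(F)$. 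This bounds the first of the two suprema defining $\rc(E)$. Running the same chain of inequalities with the roles of rows and columns interchanged bounds the second supremum by the same quantity, so $\rc(E)\leq \|v\|_{cb}\|v^{-1}\|_{cb}\,\rc(F)$.

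Taking the infimum over all isomorphisms $v:E\to F$ gives $\rc(E)\leq d_{cb}(E,F)\,\rc(F)$, as claimed. I do not anticipate a genuine obstacle: the only point requiring minor care is the bookkeeping needed to ensure that both terms in the $\max$ defining $\rc(E)$ are covered, which the symmetric version of the argument handles, and everything else is the standard fact that completely bounded maps contract row and column norms up to a factor of $\|\cdot\|_{cb}$.
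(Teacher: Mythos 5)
Your argument is correct and is essentially the paper's proof: both apply $v$, invoke the definition of $\rc(F)$, and pull back with $v^{-1}$, using the standard fact that completely bounded maps control row and column square sums up to a factor $\|v\|_{cb}$. The only differences are cosmetic (the paper normalizes $\|v^{-1}\|_{cb}\le 1$ and takes $\eps\to 0$, while you keep both factors and take the infimum over $v$ at the end).
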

\begin{proof}
For any $\eps>0$, let $v:E\to F$ be such that $\|v\|_{cb} \|v^{-1}\|_{cb} \leq (1+\eps)d_{cb}(E,F)$, and we may assume without loss of generality that $\|v\|_{cb} \leq (1+\eps)d_{cb}(E,F)$ and $\|v^{-1}\|_{cb}\leq 1$. Therefore, for any 
finite sequence $(x_i)_i$ of elements of $E$, we have (see, e.g., Exercise 1.3 in~\cite{pisierbook})
\begin{align*}
\Big\|\sum_i x_i^* x_i \Big\|  \leq \Big\|\sum_i v(x_i)^* v(x_i) \Big\|
&\leq\, \rc(F)^2 \,\Big\|\sum_i v(x_i) v(x_i)^* \Big\|
\leq  \rc(F)^2 ((1+\eps) d_{cb}(E,F))^2 \,\Big\|\sum_i x_i x_i^* \Big\|,
\end{align*}
which together with a symmetric bound on $\|\sum_i x_i x_i^* \|$ and taking the limit $\eps \to 0$ completes the proof. 
\end{proof}

\begin{claim}\label{clm:rcbound}
For any $n \ge 1$, $\rc(M_n)\leq \sqrt{n}$. More generally, $\rc(E)\leq \sqrt{n}\exactness_n(E)$ for any operator
space $E$ for which $\exactness_n(E)<\infty$.
\end{claim}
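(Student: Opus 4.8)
The plan is to prove Claim~\ref{clm:rcbound} in two parts: first the bound $\rc(M_n)\leq\sqrt n$ directly, and then deduce the general statement $\rc(E)\leq\sqrt n\,\exactness_n(E)$ by combining this special case with Claim~\ref{clm:etacbdist}.

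For the first part, I would unwind the definition of $\rc(M_n)$. It is the maximum of two suprema, and by taking adjoints of all the $x_i$ the two suprema are interchanged, so it suffices to bound one of them, say $\sup\big\{\,\|\sum_i x_ix_i^*\|^{1/2} : \|\sum_i x_i^*x_i\|\leq 1\,\big\}$, over finite sequences $(x_i)$ in $M_n$. The key observation is the elementary trace bound $\|A\|\leq \Tr(A)$ valid for any positive semidefinite $A\in M_n$, together with the fact that $\Tr(\sum_i x_ix_i^*)=\Tr(\sum_i x_i^*x_i)\leq n\,\|\sum_i x_i^*x_i\|\leq n$, where the middle inequality uses $\Tr(B)\leq n\|B\|$ for positive semidefinite $B\in M_n$. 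Hence $\|\sum_i x_ix_i^*\|\leq n$, giving $\rc(M_n)\leq\sqrt n$.

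For the second part, fix an operator space $E$ with $\exactness_n(E)<\infty$. By definition of $\exactness_n$, for every $\eps>0$ there is a subspace $F\subseteq M_n$ with $d_{cb}(E,F)\leq(1+\eps)\exactness_n(E)$. Since $F\subseteq M_n$ as operator spaces, the definition of $\rc$ applied to sequences lying in $F$ immediately gives $\rc(F)\leq\rc(M_n)\leq\sqrt n$ (any sequence in $F$ is in particular a sequence in $M_n$, and the norms $\|\sum x_ix_i^*\|$, $\|\sum x_i^*x_i\|$ are computed in the ambient $C^*$-algebra regardless). Now Claim~\ref{clm:etacbdist} yields $\rc(E)\leq d_{cb}(E,F)\,\rc(F)\leq(1+\eps)\exactness_n(E)\sqrt n$, and letting $\eps\to0$ completes the proof.

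The only mild subtlety — and the step I would be most careful about — is checking that $\rc(F)\leq\rc(M_n)$ really does follow from $F\subseteq M_n$, i.e. that restricting the defining suprema to sequences in the subspace $F$ can only decrease them; this is immediate once one notes that the quantities $\|\sum_i x_ix_i^*\|$ and $\|\sum_i x_i^*x_i\|$ depend only on the $x_i$ as elements of the ambient $C^*$-algebra $M_n$ and not on which operator space they are regarded as belonging to. Everything else is routine, using only the two elementary matrix inequalities $\|A\|\leq\Tr(A)$ and $\Tr(A)\leq n\|A\|$ for $A\in M_n$ positive semidefinite.
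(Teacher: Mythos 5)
Your proof is correct and follows essentially the same route as the paper: the bound $\rc(M_n)\leq\sqrt n$ via the two trace inequalities $\|A\|\leq\Tr(A)\leq n\|A\|$ for positive semidefinite $A\in M_n$, and the general case by combining this with Claim~\ref{clm:etacbdist}. Your extra care about $\rc(F)\leq\rc(M_n)$ for $F\subseteq M_n$ is a valid and worthwhile observation, but the argument is the same as the paper's.
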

\begin{proof}
For any finite sequence $(x_i)$ of elements of $M_n$, we have 
\begin{align*}
\Big\|\sum_i x_i^* x_i \Big\|  \leq \Tr\Big( \sum_i x_i^* x_i \Big) 
= \Tr\Big( \sum_i x_i x_i^* \Big) &\leq\, n\,\Big\|\sum_i x_i x_i^* \Big\|
\end{align*}
which together with a symmetric bound on $\|\sum_i x_i x_i^* \|$ proves the first part of the claim.
The second part follows easily from Claim~\ref{clm:etacbdist}.
\end{proof}

The following claim was communicated to us by Gilles Pisier (see~\cite{pisierbook} for the definition of $OH$).  

\begin{claim}\label{clm:oh}
The operator Hilbert space $OH$ satisfies $\rc(OH)=1$. As a result, $\rc(E) \le \sqrt{n}$ for any operator space $E$ of dimension $n$.
\end{claim}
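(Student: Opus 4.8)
The plan is to first establish that $\rc(OH) = 1$ using the defining self-duality property of the operator Hilbert space, and then to deduce the bound $\rc(E) \le \sqrt{n}$ for an arbitrary $n$-dimensional operator space $E$ by combining this with the known fact that $d_{cb}(E, OH_n) \le \sqrt{n}$ together with Claim~\ref{clm:etacbdist}.

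For the first part, recall that $OH$ (and each $n$-dimensional $OH_n$) is characterized by the property that the canonical identification of $OH$ with its conjugate operator space is a complete isometry; equivalently, for any finite family $(x_i)$ in $OH$ one has the identity $\|\sum_i x_i \otimes \bar{x}_i\|_{\min} = \sum_i \|x_i\|^2$, and more relevantly the row and column operator space structures on $OH$ coincide up to complete isometry with the ``$1/2$-interpolated'' structure. The key computation is that for $(x_i)$ in $OH$, both $\|\sum_i x_i x_i^*\|$ and $\|\sum_i x_i^* x_i\|$ equal $\|\sum_i |\langle x_i, x_i\rangle|\,$-type quantities that in fact coincide: concretely, writing $x_i$ in an orthonormal basis and using that the $R\cap C$ and $R+C$ norms collapse for $OH$, one gets $\|\sum_i x_i x_i^*\| = \|\sum_i x_i^* x_i\|$ for every finite sequence. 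This immediately gives $\rc(OH) \le 1$, and since the reverse inequality $\rc(E) \ge 1$ holds trivially for any nonzero operator space (take a single element $x$ with $\|x\|=1$: then $\|x x^*\| = \|x^* x\| = 1$), we conclude $\rc(OH) = 1$.

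For the second part, let $E$ be an operator space with $\dim E = n$. By the fundamental result of Pisier on the operator Hilbert space, $d_{cb}(E, OH_n) \le \sqrt{n}$ for every $n$-dimensional operator space $E$. Since $OH_n$ is completely isometric to a subspace of $OH$, we have $\rc(OH_n) = \rc(OH) = 1$ (the argument above applies verbatim to any subspace of $OH$, as the row/column norm collapse is inherited). Applying Claim~\ref{clm:etacbdist} with $F = OH_n$ then yields
\[
\rc(E) \,\le\, d_{cb}(E, OH_n)\,\rc(OH_n) \,\le\, \sqrt{n}\cdot 1 \,=\, \sqrt{n},
\]
as desired.

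The main obstacle I anticipate is the first step: cleanly justifying the identity $\|\sum_i x_i x_i^*\| = \|\sum_i x_i^* x_i\|$ for elements of $OH$. This is really a statement about the operator space structure of $OH$ rather than a bare Hilbert-space fact — one must invoke the interpolation definition $OH = (R, C)_{1/2}$ (or the self-duality characterization) and the behavior of row and column norms under complex interpolation. I would either cite the relevant facts from~\cite{pisierbook} directly (the self-duality of $OH$ and the formula for $\rc$ in terms of the maps $C \otimes_{\min} E \to R \otimes_{\min} E$), or, if a self-contained argument is wanted, note that for $OH$ the natural map $C \otimes_{\min} OH \to R \otimes_{\min} OH$ is a complete isometry by self-duality, which is exactly the assertion $\rc(OH) = 1$ given the footnote characterization of $\rc$ in the paper. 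The remaining steps are routine given the earlier claims.
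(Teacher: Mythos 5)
Your proposal is correct and follows essentially the same route as the paper: the paper cites Exercise 7.6 of~\cite{pisierbook} for the identity $\|\sum_i x_i^*x_i\|=\|\sum_i x_ix_i^*\|=(\sum_{i,j}|\langle x_i,x_j\rangle|^2)^{1/2}$ (which is exactly the ``row equals column'' fact you defer to the self-duality of $OH$), and then combines $d_{cb}(E,OH_n)\le\sqrt{n}$ (Corollary 7.7 there) with Claim~\ref{clm:etacbdist}, just as you do. One small correction: the self-duality identity you recall should read $\|\sum_i x_i\otimes\bar{x}_i\|_{\min}=(\sum_{i,j}|\langle x_i,x_j\rangle|^2)^{1/2}$ rather than $\sum_i\|x_i\|^2$, but this slip plays no role in your argument.
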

\begin{proof}
By Exercise 7.6 of~\cite{pisierbook}, for any $(x_i)_i$ in $OH$,
\begin{align*}
\Big\|\sum_i x_i^* x_i \Big\| = \Big\|\sum_i x_i x_i^* \Big\| = \Big( \sum_{i,j} |\langle x_i,x_j \rangle|^2 \Big)^{1/2},
\end{align*}
and so we get $\rc(OH)=1$. The second part of the claim follows from Claim~\ref{clm:etacbdist} and the fact that $d_{cb}(E,OH_n) \le \sqrt{n}$
for any $n$-dimensional operator space $E$ (see Corollary 7.7 in~\cite{pisierbook}).
\end{proof}

\subsection{Quantitative version of Theorem~\ref{thm:jungepisier}}\label{apx:quantitative}

The following claim is a direct consequence of the results in~\cite{HT99}.

\begin{claim}\label{claim:ht} Let $n\geq 1$ be an integer, $a_1,\ldots,a_r\in M_n$ and $0<\gamma\leq 1$ such that
$$  \Big\| \sum_j a_j^* a_j \Big\|\,\leq\,\gamma\quad\text{and}\quad \Big\| \sum_j a_ja_j^*  \Big\|\,\leq\, 1.$$ 
For any integer $d$, define 
$$S_d \,:=\, \sum_j a_j \otimes G_j \,\in M_n\otimes M_d,$$
 where for each $j$, $G_j$ is a $d\times d$ complex matrix with entries $(G_j)_{k,\ell} = (g_{jk\ell}+i \,h_{jk\ell})/\sqrt{2}$, where $\{g_{jk\ell},h_{jk\ell}\}$ are distributed as independent real normal random variables with mean $0$ and variance $1/d$. Then for any $0<\eps\leq 1$ and $d \ge 32\eps^{-2}\ln (4n/\eps)$,
$$\textsc{E}\big[\,\| S_d \|^2\,\big]\, \leq\, (1+\eps)\,(\sqrt{\gamma}+1)^2.$$
\end{claim}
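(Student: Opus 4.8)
The plan is to reduce the bound on $\textsc{E}[\|S_d\|^2]$ to a standard estimate on the operator norm of a Gaussian series with matrix coefficients, of the type established by Haagerup and Thorbjørnsen~\cite{HT99}. First I would observe that $S_d = \sum_j a_j \otimes G_j$ is, up to the normalization of the $G_j$, a $d$-fold amplification of a free-probability-style Gaussian random matrix model: each $G_j$ is a (complex) Ginibre matrix scaled so that $\textsc{E}[G_j^*G_j] = \textsc{E}[G_jG_j^*] = \Id_d$. The relevant non-asymptotic statement from~\cite{HT99} is that for such a model, $\textsc{E}[\|S_d\|^2]$ is controlled by the ``free'' quantity $\big(\|\sum_j a_j^*a_j\|^{1/2} + \|\sum_j a_ja_j^*\|^{1/2}\big)^2$, with an error term that decays in $d$; plugging in the hypotheses $\|\sum_j a_j^*a_j\| \le \gamma$ and $\|\sum_j a_ja_j^*\| \le 1$ gives exactly $(\sqrt{\gamma}+1)^2$ as the main term. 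The content of the claim is the explicit tracking of the $(1+\eps)$ multiplicative slack and the explicit threshold $d \ge 32\eps^{-2}\ln(4n/\eps)$.

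The key steps, in order, would be: (i) rescale and symmetrize, writing $G_j = (X_j + iY_j)/\sqrt{2}$ with $X_j, Y_j$ independent real Gaussian matrices with i.i.d.\ $N(0,1/d)$ entries (equivalently $X_j = (X_j + X_j^t)/2 + (X_j - X_j^t)/2$, splitting into GOE-type and antisymmetric Gaussian parts) so that $S_d$ becomes a real Gaussian series $\sum_k b_k \otimes H_k$ in self-adjoint coefficients, to which the Haagerup--Thorbjørnsen machinery applies directly; (ii) invoke the non-asymptotic norm bound from~\cite{HT99} (their linearization/Stieltjes-transform argument, or the cleaner restatement in~\cite[Section~16]{PisierGT}), which yields $\textsc{E}[\|S_d\|] \le \sigma + c\,\sigma'\,(\log(n d))^{?}/\sqrt d$ or a comparable bound, where $\sigma$ is the free bound $\sqrt\gamma + 1$; (iii) convert the bound on $\textsc{E}[\|S_d\|]$ into one on $\textsc{E}[\|S_d\|^2]$ using Gaussian concentration of the norm (the map $(G_j)_j \mapsto \|S_d\|$ is Lipschitz with constant $\le \max_j\|a_j\|_{op}$, which is itself bounded in terms of $\gamma$ and $1$), giving $\textsc{E}[\|S_d\|^2] \le (\textsc{E}\|S_d\|)^2 + O(1/d)$; (iv) finally, choose constants so that for $d \ge 32\eps^{-2}\ln(4n/\eps)$ all error terms are absorbed into the factor $(1+\eps)$.

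The main obstacle I expect is step (ii): extracting from~\cite{HT99} a genuinely non-asymptotic, dimension-explicit bound with the right logarithmic dependence $\ln(n/\eps)$ and the correct $\eps^{-2}$ scaling, rather than the asymptotic ``$\limsup$'' statements in which such results are often phrased. One must either use the quantitative form of the Stieltjes-transform estimates in~\cite{HT99} directly, or route through a matrix-concentration argument (e.g.\ a non-commutative Khintchine / matrix Bernstein bound for Gaussian series, $\textsc{E}\|\sum_k b_k \otimes H_k\| \lesssim \sigma(1 + \sqrt{\ln(nd)/d})$-type inequalities), and then verify that the resulting threshold on $d$ is of the stated form. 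A secondary, more routine subtlety is keeping the Lipschitz constant in the concentration step (iii) under control: since $\gamma \le 1$, one has $\max_j \|a_j\|_{op} \le \|\sum_j a_ja_j^*\|^{1/2} \le 1$, so the concentration contributes only an $O(1/d)$ correction, comfortably dominated by $\eps(\sqrt\gamma+1)^2$ for $d$ above the stated threshold. The remaining steps are bookkeeping.
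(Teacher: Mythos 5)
Your high-level plan --- reduce everything to a non-asymptotic norm estimate for the Gaussian series from~\cite{HT99} --- is the same as the paper's, and you have correctly located the crux: everything hinges on having a genuinely dimension-explicit bound rather than the asymptotic statements in which such results are usually quoted. But your proposal leaves exactly that step open, and the fallbacks you offer do not all work. The generic route through non-commutative Khintchine or matrix Bernstein gives $\textsc{E}\|S_d\|\lesssim \sigma\sqrt{\ln(nd)}$, i.e.\ a \emph{multiplicative} logarithmic loss, which destroys the sharp constant $(\sqrt{\gamma}+1)^2$; to keep the main term correct up to a factor $(1+\eps)$ you really do need the Haagerup--Thorbj{\o}rnsen estimate itself, not a substitute.

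The needed input exists in~\cite{HT99} in a cleaner form than the $\textsc{E}\|S_d\|\le\sigma+\text{error}$ bound you postulate: their ``Key Estimates'' (Eq.~(0.1) in Section~0.5) give, for all $0\le\tau\le d/2$, the operator inequality
\begin{equation*}
\textsc{E}\big[e^{\tau S_d^* S_d}\big]\,\le\, e^{(\sqrt{\gamma}+1)^2\tau+(\gamma+1)^2\tau^2/d}\,\Id_{M_{nd}}.
\end{equation*}
From this the paper's proof is three lines: take the trace to get $\textsc{E}\big[e^{\tau\|S_d^*S_d\|}\big]\le nd\,e^{(\sqrt{\gamma}+1)^2\tau+(\gamma+1)^2\tau^2/d}$, apply Jensen (concavity of $\ln$) to obtain
\begin{equation*}
\textsc{E}\big[\|S_d^*S_d\|\big]\,\le\,(\sqrt{\gamma}+1)^2+\frac{(\gamma+1)^2\tau}{d}+\frac{\ln(nd)}{\tau},
\end{equation*}
and set $\tau=\eps d/2$, which makes the two error terms total at most $\eps(\sqrt{\gamma}+1)^2$ once $d\ge (4/\eps^2)\ln(nd)$ --- guaranteed by the stated threshold. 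Note that this bounds $\textsc{E}[\|S_d\|^2]=\textsc{E}[\|S_d^*S_d\|]$ directly, so your steps (i) and (iii) are unnecessary: there is no need to symmetrize into self-adjoint coefficients (the estimate applies to the complex Ginibre model as given), and no need for a Gaussian-concentration step to pass from the first moment to the second. Your step (iii) is correct as far as it goes (the Lipschitz constant is indeed $O(1)$ and the variance $O(1/d)$), but the exponential-moment route makes it moot. In short: right target, right diagnosis of the obstacle, but the obstacle is not actually overcome in your write-up, and one of your two proposed escape routes (generic matrix concentration) would fail quantitatively.
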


\begin{proof}
Eq.~(0.1) from (0.5 Key Estimates) in~\cite{HT99} states that for any $0\leq \tau \leq \min\{d/(2\gamma),d/2\} = d/2$ it holds that 
\begin{align*}%\label{eq:ht}
\textsc{E}\big[e^{\tau S_d^* S_d}\big]\,\leq\, e^{(\sqrt{\gamma}+1)^2 \tau + (\gamma+1)^2 \tau^2/d} \Id_{M_{nd}}.
\end{align*}
Taking the trace on both sides, we get
$$\textsc{E}\big[\big\|e^{\tau{S}_d^* {S}_d}\big\|\big]\,\leq\,\textsc{E}\big[\Tr\big(e^{\tau{S}_d^* {S}_d}\big)\big]\,\leq\, nd\, e^{(\sqrt{\gamma}+1)^2 \tau + (\gamma+1)^2 \tau^2/d}.$$
Using $\|e^{\tau{S}_d^* {S}_d}\| = e^{\tau\|{S}_d {S}_d^*\|}$ and concavity of the logarithm, for $\tau>0$
$$\textsc{E}\big[\big\|{S}_d^* {S}_d\big\|\big]\,\leq\, (\sqrt{\gamma}+1)^2 + \frac{(\gamma+1)^2 \tau}{d} + \frac{\ln(nd)}{\tau}.$$
By setting $\tau = \eps d/2$ we get
$$ \frac{(\gamma+1)^2 \tau}{d} + \frac{\ln(nd)}{\tau}\,\leq\, (\sqrt{\gamma}+1)^2 \frac{\eps}{2} + \frac{2\ln(nd)}{\eps d}  \,\leq\, \eps\, (\sqrt{\gamma}+1)^2$$
provided $d\geq (4/\eps^2)\ln (nd)$, which is guaranteed by the lower bound on $d$ placed in the claim.
% \tnote{We probably don't want to say this, but just to check, here is the calculation: clearly with the choice of $d$ in the claim statement it holds that $d - (4/\eps)^2\ln(n) \geq d/2,$ so it will suffice to verify that  $d/ 2>4\eps^{-2}\ln(d)$, or $d/\ln(d) \geq 8\eps^{-2}$, which is satisfied by taking $d\ge 16\eps^{-2}\ln(8\eps^{-2}) $  (here I am using that, for any $\alpha>1$, $x/\log(x)>\alpha$ as long as $x\ge 2\alpha\ln(\alpha)$). I upper bounded the $\ln(8\eps^{-2}$ by $2\ln(4/\eps)$, and that's it.}
\end{proof} 

As an immediate corollary we obtain the following. 

\begin{corollary}\label{cor:ht} Let $\cA$ be a $C^*$-algebra and $E\subseteq \cA$ a finite-dimensional operator space.
% such that 
%$$\exactness_n(E)\,:=\,\inf\{d_{cb}(E,F)\,|\,F\subseteq M_n\}\,<\,\infty.$$
 Let $(a_i)_i$ be a finite sequence of elements of $E$, $d$ an integer, and $\gamma,S_d$ be as in Claim~\ref{claim:ht}. Then for any $0<\eps\leq 1$, integer $n\geq 1$ such that $\exactness_n(E)<\infty$, and $d\geq 32\eps^{-2}\ln (4n/\eps)$,
$$\textsc{E}\big[\,\| S_d \|^2\,\big]\, \leq\, (1+\eps)\,\exactness_n(E)^2\,(\sqrt{\gamma}+1)^2.$$
\end{corollary}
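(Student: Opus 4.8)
The goal is to upgrade Claim~\ref{claim:ht}, which controls $\textsc{E}[\|S_d\|^2]$ for $(a_j)$ living in a \emph{matrix algebra} $M_n$, to the case of an abstract finite-dimensional operator space $E$ for which $\exactness_n(E)<\infty$. The plan is to transport the sequence $(a_i)$ through a near-optimal completely bounded embedding of $E$ into $M_n$ and track how the two relevant norms (the row norm $\|\sum a_i a_i^*\|$ and the column norm $\|\sum a_i^* a_i\|$) and the Gaussian sum norm $\|S_d\|$ transform.

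Concretely, I would proceed as follows. Fix $\delta>0$. Since $\exactness_n(E)<\infty$, choose an isomorphism $v:E\to F\subseteq M_n$ with $\|v\|_{cb}\|v^{-1}\|_{cb}\leq(1+\delta)\exactness_n(E)$, and normalize so that $\|v^{-1}\|_{cb}\leq 1$ and $\|v\|_{cb}\leq(1+\delta)\exactness_n(E)$. Set $b_i:=v(a_i)\in M_n$. Because $v^{-1}$ is completely contractive and completely bounded maps are automatically completely positive-dominated on the row/column structure (this is exactly the statement used in Claim~\ref{clm:etacbdist}, see Exercise 1.3 in~\cite{pisierbook}), we get
$$\Big\|\sum_i b_i^* b_i\Big\|\,\leq\,\Big\|\sum_i a_i^* a_i\Big\|\,\leq\,\gamma,\qquad \Big\|\sum_i b_i b_i^*\Big\|\,\leq\,\Big\|\sum_i a_i a_i^*\Big\|\,\leq\,1,$$
so the hypotheses of Claim~\ref{claim:ht} hold verbatim for $(b_i)$ (with the \emph{same} $\gamma$). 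Next, observe that $S_d=\sum_i a_i\otimes G_i$ and $T_d:=\sum_i b_i\otimes G_i$ are related by $T_d=(v\otimes\Id_{M_d})(S_d)$; since the $G_i$ are fixed matrices, $\|T_d\|\leq\|v\otimes\Id_{M_d}\|\,\|S_d\|\leq\|v\|_{cb}\|S_d\|$, and likewise $\|S_d\|=\|(v^{-1}\otimes\Id)(T_d)\|\leq\|v^{-1}\|_{cb}\|T_d\|\leq\|T_d\|$. Applying Claim~\ref{claim:ht} to $(b_i)$ in $M_n$, for $d\geq 32\eps'^{-2}\ln(4n/\eps')$ we obtain $\textsc{E}[\|T_d\|^2]\leq(1+\eps')(\sqrt\gamma+1)^2$, hence
$$\textsc{E}\big[\|S_d\|^2\big]\,\leq\,\textsc{E}\big[\|T_d\|^2\big]\,\leq\,(1+\eps')(\sqrt\gamma+1)^2.$$
Wait — that loses the $\exactness_n(E)^2$ factor and is \emph{too strong}; the issue is the direction of the last inequality, since $\|S_d\|\le\|v^{-1}\|_{cb}\|T_d\|$ with $\|v^{-1}\|_{cb}\le1$ does give $\textsc{E}[\|S_d\|^2]\le\textsc{E}[\|T_d\|^2]$. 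The resolution is that the normalization cannot be chosen so favourably: one should only normalize so that $\|v\|_{cb}\|v^{-1}\|_{cb}$ is near $\exactness_n(E)$, and keep $\|v^{-1}\|_{cb}$ arbitrary. Then $\|S_d\|\le\|v^{-1}\|_{cb}\|T_d\|$ but the row/column bounds for $(b_i)$ pick up a factor $\|v\|_{cb}^2$, i.e.\ $\|\sum b_i^* b_i\|\le\|v\|_{cb}^2\gamma=:\gamma'$ and $\|\sum b_i b_i^*\|\le\|v\|_{cb}^2$; rescaling $b_i\mapsto b_i/\|v\|_{cb}$ restores the hypotheses of Claim~\ref{claim:ht} with the same $\gamma$, and then $\textsc{E}[\|S_d\|^2]\le\|v^{-1}\|_{cb}^2\|v\|_{cb}^2\,\textsc{E}[\|T_d/\|v\|_{cb}\|^2]\cdot\|v\|_{cb}^2$ — more carefully, $\|S_d\|\le\|v^{-1}\|_{cb}\|T_d\|=\|v^{-1}\|_{cb}\|v\|_{cb}\cdot\|T_d/\|v\|_{cb}\|$, so $\textsc{E}[\|S_d\|^2]\le(\|v^{-1}\|_{cb}\|v\|_{cb})^2(1+\eps')(\sqrt\gamma+1)^2\le(1+\delta)^2\exactness_n(E)^2(1+\eps')(\sqrt\gamma+1)^2$.

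Finally I would clean up the constants: set $\eps'$ and $\delta$ so that $(1+\delta)^2(1+\eps')\leq 1+\eps$ (e.g.\ $\eps'=\delta=\eps/8$ suffices for $\eps\le1$), note that the condition $d\geq 32\eps'^{-2}\ln(4n/\eps')$ is implied by $d\geq 32\eps^{-2}\ln(4n/\eps)$ after adjusting the universal constant (or, if one wants the literal bound in the corollary, absorb the change into the implicit constant — the corollary as stated already uses $d\geq 32\eps^{-2}\ln(4n/\eps)$, so a slightly larger universal constant is harmless), and let $\delta\to 0$ to conclude
$$\textsc{E}\big[\|S_d\|^2\big]\,\leq\,(1+\eps)\,\exactness_n(E)^2\,(\sqrt\gamma+1)^2.$$
The only genuinely delicate point — and the one I would be most careful about — is the bookkeeping of the cb-norm normalization so that the factor $\exactness_n(E)^2$ lands on the right side with no spurious loss in $\gamma$; everything else is a direct composition of Claim~\ref{claim:ht} with the functoriality of the minimal tensor norm under $v\otimes\Id_{M_d}$ and the row/column-norm monotonicity already recorded in Claim~\ref{clm:etacbdist}.
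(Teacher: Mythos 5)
Your final argument is correct and is essentially the paper's proof: embed $E$ into $M_n$ by a near-optimal cb-isomorphism $v$ normalized so that $v$ is a complete contraction (the paper takes $\|v\|_{cb}=1$, $\|v^{-1}\|_{cb}=\exactness_n(E)$ directly, which is exactly what your rescaling $b_i\mapsto b_i/\|v\|_{cb}$ amounts to), applies Claim~\ref{claim:ht} to the images, and pulls back through $\|v^{-1}\otimes\Id_{M_d}\|\le\|v^{-1}\|_{cb}$. Two small points: your first-pass normalization ($\|v^{-1}\|_{cb}\le 1$) does not yield the row/column bounds for the $b_i$ --- those require $\|v\|_{cb}\le 1$, not $\|v^{-1}\|_{cb}\le 1$, which is why the conclusion looked too strong --- and the final adjustment to $\eps'<\eps$ is unnecessary, since you can take $\eps'=\eps$ and let $\delta\to 0$ with $d$ fixed, recovering the stated threshold and constants exactly.
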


\begin{proof}
By definition of $\exactness_n(E)$, there exists a completely bounded isomorphism $v:E\to F\subseteq M_n$ such that $\|v\|_{cb} \|v^{-1}\|_{cb} = \exactness_n(E)$, and we may assume without loss of generality that $\|v\|_{cb} = 1$ and $\|v^{-1}\|_{cb}=\exactness_n(E)$. Since $\|v\|_{cb}\leq 1$, the elements $v(a_i)\in M_n$ satisfy (see, e.g., Exercise 1.3 in~\cite{pisierbook})
$$\Big\| \sum_i v(a_i)^* v(a_i) \Big\|\,\leq\, \Big\| \sum_i a_i^* a_i \Big\|\,\leq\,\gamma\quad\text{and}\quad  \Big\| \sum_i v(a_i)v(a_i)^*  \Big\|\,\leq\, \Big\| \sum_i a_i a_i^*  \Big\|\,\leq\,1.$$
Applying Claim~\ref{claim:ht} to the $v(a_i)$, we obtain that
for any $\eps>0$ and $d\geq 32\eps^{-2}\ln (4n/\eps)$,
$$\textsc{E}\big[\,\big\| \sum_i v(a_i) \otimes G_i \big\|^2\,\big]\, \leq\, (1+\eps)\,(\sqrt{\gamma}+1)^2.$$
Using $\|v^{-1}\otimes \Id_{M_d}\|\leq\|v^{-1}\|_{cb}\leq \exactness_n(E)$ proves the corollary. 
\end{proof}

Using Corollary~\ref{cor:ht}, we can prove the quantitative part of Theorem~\ref{thm:jungepisier}. Here we are essentially following the proof given in~\cite[Section 16]{PisierGT}, but while keeping track of the parameters.

\begin{proof}[Proof (of quantitative part of Theorem~\ref{thm:jungepisier})]
We prove the quantitative part using the original stronger form of Theorem~\ref{thm:jungepisier}, i.e., 
with the constraint~\eqref{eq:sdpcons} in the definition of $\nc{u}$ replaced by~\eqref{eq:l1-cons} (with $t_i=1$). Let $(x_i,y_i)_i$ be such that 
$$ \Big| \sum_i \, u(x_i,y_i)\Big|  \,\geq\, (1-\eps/2) \nc{u},$$
and the sequence $(x_i,y_i,t_i=1)_i$ satisfies the constraint~\eqref{eq:l1-cons}. Let $d$ be such that $d> 128\eps^{-2}\ln (8n/\eps)$, and for every $i$ let $G_i$ be a $d\times d$ matrix with independent entries distributed as in the statement of Claim~\ref{claim:ht}. Define 
$$ x = \sum_i {x}_i \otimes G_i\quad \text{and}\quad y = \sum_i {y}_i \otimes \overline{G_i},$$
where $\overline{G_i}$ denotes the entrywise complex conjugate, and note that by Corollary~\ref{cor:ht} our choice of $d$ together with the constraint~\eqref{eq:l1-cons} implies that
\begin{equation}\label{eq:obj-val-1}
 \textsc{E}\big[\, \|x\|\|y\| \,\big] \,\leq\, \Big(\textsc{E}\big[\|x\|^2\big]\, \textsc{E}\big[ \|y\|^2 \big]\Big)^{1/2} \,\leq\,   4\exactness_n(E)\exactness_n(F) (1+\eps/2).
\end{equation}
We may also compute 
\begin{align*}%\label{eq:obj-val-3}
\Big|\textsc{E}\big[ \,u_d^\Psi(x,y)\,\big]\Big|\,=\,\Big|\textsc{E}\Big[\, \sum_{i,j} \, d^{-1} \Tr(G_iG_j^*) \,u({x}_i,{y}_j)\Big]\Big| \,=\,  \Big|\sum_i \, u({x}_i,{y}_i)\Big|\,\geq\, (1-\eps/2)\nc{u},
\end{align*}
which using $|u_d^\Psi(x,y)|\leq \|u_d^\Psi\| \|x\|\|y\|$ for any $x,y$ together with~\eqref{eq:obj-val-1} completes the proof. 
\end{proof}

\bibliographystyle{alphaabbrvprelim}
\bibliography{oss}

\end{document}